\documentclass[10pt]{amsart}
\usepackage[utf8]{inputenc}
\usepackage[T1]{fontenc}
\usepackage[frenchb, english]{babel}
\usepackage{amsmath, amssymb, amsthm}
\title{Rapid Stabilization in a Semigroup Framework}
\author{Ambroise Vest}
\address{Institut de Recherche Math\'ematique Avanc\'ee,  Universit\'e de Strasbourg\\7 rue Ren\'e Descartes, 67084 Strasbourg C\'edex, France}
\email{ambroise.vest@math.unistra.fr}
\date{January 24, 2013}
\subjclass{Primary 93D15; Secondary 37L05, 49N05}
\keywords{stabilizability by feedback, linear distributed system, partial differential equation, Riccati equation}
\newcommand{\C}{\mathbb{C}}

\newcommand{\R}{\mathbb{R}}
\newcommand{\N}{\mathbb{N}}

\newcommand{\om}{\omega}
\newcommand{\Lom}{\Lambda_{\omega}}
\newcommand{\ILom}{\Lambda_{\omega}^{-1}}

\newcommand{\Tom}{T_{\omega}}
\newcommand{\eom}{e_{\omega}}
\newcommand{\la}{\langle}

\newcommand{\ra}{\rangle}
\newcommand{\wJ}{\widetilde{J}}
\newcommand{\wA}{\widetilde{A}}

\newtheorem{thm}{Theorem}[section]
\newtheorem{lem}[thm]{Lemma}
\newtheorem{prop}[thm]{Proposition}

\theoremstyle{definition}
\newtheorem*{deff}{Definition}

\theoremstyle{remark}
\newtheorem*{rk}{Remark}

\theoremstyle{remark}

\theoremstyle{remark}

\theoremstyle{remark}

\begin{document}
\maketitle
\begin{abstract}
We prove the well-posedness of a linear closed-loop system with an explicit (already known) feedback leading to arbitrarily large decay rates. We define a mild solution of the closed-loop problem using a dual  equation and  we prove that the original operator perturbed by the feedback is (up to the use of an extension) the infinitesimal generator of a strongly continuous group. We also give a justification to the exponential decay of the solutions. Our method is direct and avoids the use of optimal control theory.
\end{abstract}
\section{Introduction}\label{Introduction}
We consider a physical system which state $x$ satisfies the Cauchy problem 
\begin{equation*} 
\begin{cases}
x'(t)=Ax(t)+Bu(t), \\
x(0)=x_0,
\end{cases}
\end{equation*}
where $A$ is a linear differential operator that models the dynamics of the system and $B$ is a  control operator that allows us to act on the system through a control \nolinebreak $u$.

The \emph{stabilization problem} consists in finding a \emph{feedback operator} $F$ such that the solutions of the \emph{closed-loop problem}
\begin{equation*}
x'=(A+BF)x
\end{equation*}
tend to zero as $t$ tends to $+\infty$. 

For finite-dimensional systems, D. L. Lukes \cite{Lukes1968} and D. L. Kleinman \cite{Kleinman1970} (see also the book of D. L. Russell \cite[pp. 112-117]{Russell1979}) 
gave a systematic stabilization method thanks to an explicit feedback constructed with the  controllability Gramian  
\begin{equation*}
\Lambda:=\int_0^T e^{-tA}BB^*e^{-tA^*}dt.
\end{equation*}
The above matrix is positive-definite provided that  $(A,B)$ is exactly controllable (equivalently $(-A^*,B^*)$ is observable). In this case, the feedback
\begin{equation*}
F:=-B^*\Lambda^{-1}
\end{equation*}
stabilizes the system.

Later, adding a suitable weight-function inside the Gramian operator, M. Slemrod \cite{Slemrod}  adapted and improved this result to the case of infinite-dimensional systems with  bounded control operators. More precisely, his feedback depends on a tuning parameter $\omega >0$ that ensures a prescribed exponential decay rate of the solutions. The weighted Gramian 
\begin{equation*}
\Lom:=\int_0^T e^{-2\omega t} e^{-tA}BB^*e^{-tA^*}dt
\end{equation*}
is positive-definite if $(A,B)$ is exactly controllable in time $T$ (or $(-A^*,B^*)$ is exactly observable in time $T$). Then the solutions of the closed-loop problem provided with the feedback
\begin{equation*}
F:=-B^*\ILom
\end{equation*}
decrease to zero with an exponential decay rate being at least $\omega$ i.e. there is a positive constant $c$ such that
\begin{equation*}
\|x(t)\|\le ce^{-\om t}\|x_0\|, \qquad t\ge 0,
\end{equation*}
for all initial data $x_0$, where $\|\cdot\|$ denotes a norm on the state space.

The problems that we have in mind are linear time-reversible partial differential equations (waves, plates...) with \emph{boundary control}. These  are infinite-dimensional problems and controlling only at (a part of) the boundary of the domain imposes that the control operator $B$ is unbounded. This leads to difficulties in choosing the right functional spaces and the right notion of solution to have well-posed open-loop and closed-loop problems. 

J.-L. Lions \cite{Lions1988} gave an answer to the stabilization of such systems. His proof, using the theory of optimal control, is non-constructive and does not give any information on the decay rate of the solutions. By using a slightly different  weight function in the above  operator $\Lom$, V. Komornik \cite{K97}  gave an explicit feedback leading to arbitrarily large decay rates. His approach does not use the theory of optimal control : an advantage is that one does not have to use strong existence and uniqueness results for infinite dimensional Riccati equations. In fact, the weight function is chosen in such a way that $\Lom$ is the solution of an algebraic Riccati equation. Formally,
\begin{equation*}
A\Lom+\Lom A^*+\Lom C^* C\Lom -BB^*=0,
\end{equation*}
where a definition of the operator $C$ and the rigourous meaning of this equation will be given later.
For a presentation of this method of stabilization, see also the books of V. Komornik and P. Loreti \cite[pp. 23-31]{KL05} (where a generalization of this method to partial stabilization is also given) and J.-M. Coron \cite[pp. 347-351]{Coron07}.

Applications of this method to the boundary stabilization of the wave equation and the plates equation are given in \cite{K97}. This method can also be used to stabilize Maxwell equations \cite{K98} and elastodynamic systems \cite{AK1999}.
Moreover, numerical and mechanical experiments (\cite{BJCR2004}, \cite{Briffaut1999}, \cite{Urquiza2000}) have proved the efficiency of this feedback.

\bigskip
In this paper, after recalling the construction of  V. Komornik's feedback law and some results about the well-posedness of the open-loop problem (section \ref{Hypotheses}), we give a proof of two points that were not justified in \cite{K97}.
\begin{itemize}
\item[\textbullet]
The first  point (section \ref{WPCL}) is the well-posedness of the closed-loop problem with the explicit feedback introduced in \cite{K97}. Using the Riccati equation satisfied by $\Lom$, we introduce a ``dual'' closed-loop problem, which is easier to deal with because it does not involve the unbounded control operator $B$. Then we give a definition of \emph{the mild solution} of the initial closed-loop problem and we prove in Theroem \ref{thmWP} that this solution satisfies a variation of constants formula. To derive this formula, we adapt a representation formula of F. Flandoli \cite{Flan87} to the case of an algebraic Riccati equation. In Theorem \ref{thmWP2}, we prove that using a suitable extension $\wA$ of $A$, the  operator $\wA-BB^*\ILom$ is the infinitesimal generator of a strongly continuous group on the original state space.
\item[\textbullet]
The second point (section \ref{Decay}) consists in the justification of a formula, contained in Proposition \ref{RepIL}, that is used in \cite{K97} to prove the exponential decay of the solutions. We recall at the end of the paper how this formula is  used to obtain the exponential decay.
\end{itemize}

\emph{
The author thanks Professors I. Lasiecka and R. Triggiani for fruitful conversations on the subject of this work.
}
\section{A short review of the construction of the feedback and of the open-loop problem}\label{Hypotheses}
\subsection{Hypotheses and notations}
The state space $H$ and the control space $U$ are Hilbert spaces. We denote by $H'$ and $U'$ their duals
and by 
\begin{itemize}
\item[]
$J:U'\to U$  the canonical  isomorphism between $U'$ and $U$;
\item[]
$\wJ:H\to H'$  the canonical  isomorphism between $H$ and $H'$.
\end{itemize}

Moreover we make the following hypotheses :
\begin{itemize}
\item[\textbullet] {\bf(H1)} The operator $A : D(A)\subset H\to H$ is the infinitesimal generator of a strongly continuous group $e^{tA}$ on $H$. 
\footnote{
Thus his adjoint $A^*: D(A^*)\subset H'\to H'$ is also the infinitesimal generator of a strongly continous group $e^{tA^*}=(e^{tA})^*$ on $H'$.
}
\item[\textbullet] {\bf(H2)}
$B \in L(U,D(A^*)')$, where $D(A^*)'$ denotes the dual 
\footnote{Provided with the norm
\begin{equation*}
\|x\|^2_{D(A^*)}:=\|x\|^2_{H'}+\|A^*x\|^2_{H'},
\end{equation*}
$D(A^*)$ is a Hilbert space. Moreover,
\begin{equation*}
D(A^*)\subset H' \quad \Longrightarrow \quad H \subset D(A^*)'.
\end{equation*}
}
space of $D(A^*)$.
Identifying $D(A^*)''$ with $D(A^*)$, we denote by $B^*\in L(D(A^*),U')$ the adjoint of $B$. This implies the existence of a number $\lambda \in \C$ and a bounded operator $E\in L(U,H)$ such that
\begin{equation*}
B^*=E^*(A+\lambda I)^*.
\end{equation*}
\item[\textbullet] {\bf(H3)}
Given $T>0$, there exists a constant $c_1(T)>0$ such that
\begin{equation*}
\int_0^T\|B^*e^{-tA^*}x\|^2_{U'}dt\le c_1(T)\|x\|^2_{H'}
\end{equation*}
for all $x\in D(A^*)$. In the examples, this inequality represents a trace regularity result (see \cite{LT1983}). It is usually called the \emph{direct inequality}.
\item[\textbullet] {\bf(H4)}
There exists a number $T>0$ and a constant $c_2(T)>0$ such that
\begin{equation*}
c_2(T)\|x\|_{H'}^2\le \int_0^T\|B^*e^{-tA^*}x\|^2_{U'}dt
\end{equation*}
for all $x\in D(A^*)$. It is usually called the \emph{inverse} or \emph{observability inequality}.
\end{itemize}
\begin{rk}
Thanks to the assumptions (H1)-(H2), if the direct inequality in (H3) is satisfied for one $T>0$, then it is satisfied for all $T>0$. Moreover, the estimation remains true (up to a change of the constant in the right member) if we integrate on $(-T,T)$. Extending this inequality to all $x\in H'$ by density, the map $t\mapsto B^*e^{-tA^*}x$ can be seen as an element of $L^2_{\text{loc}}(\R;U').$
\end{rk}
\subsection{Construction of the feedback}\label{Construction}${}$

\bigskip
\emph {The operator $\Lom$.}
We suppose that the hypotheses (H1)-(H4) hold true (the number $T>0$ giving the observability inequality in (H4)) and we recall the construction of the feedback exposed in \cite{K97}, by defining a modified, weighted Gramian. We fix a number $\om>0$, set
\begin{equation*}
\Tom:=T+\frac{1}{2\om},
\end{equation*}
and we introduce a weight function  on the interval $[0,\Tom]$ :
\begin{equation*}
e_{\omega}(s):= 
\left\{
\begin{aligned}
&e^{-2\omega s} \quad \text{si} \quad  0\le s \le T \\
&2\omega e^{-2\omega T}(T_{\omega}-s) \quad \text{si } \quad  T \le s \le T_{\omega}. 
\end{aligned}
 \right.
\end{equation*}

Thanks to (H3) and (H4),
\begin{equation*}
\la\Lom x, y\ra_{H,H'}:=\int_0^{\Tom} \eom(s) \la JB^*e^{-sA^*}x, B^*e^{-sA^*}y \ra_{U,U'} ds
\end{equation*}
defines a positive-definite self-adjoint operator $\Lom \in L(H',H)$.
Hence $\Lom$ is invertible and we denote by $\ILom\in L(H,H')$ its inverse.

Actually the weight function $\eom$ has been chosen in such a way that the operator $\Lom$ is solution to an algebraic Riccati equation. We are going to derive this Riccati equation because it will play a key role in the analysis of the well-posedness of the closed-loop problem and the exponential decay of the solutions.

\bigskip
\emph{An algebraic Riccati equation.}
Let $x, y \in D((A^*)^2)$. We compute the integral 
\begin{equation}\label{integral}
\int_0^{\Tom}\frac{d}{ds}\Big [ \eom(s) \la JB^*e^{-sA^*}x, B^*e^{-sA^*y} \ra_{U,U'}\Big]  ds
\end{equation}
in two different ways. Note that the quantity between the brackets is differentiable in the variable $s$ thanks to the regularity of $x$ and $y$, and the hypothesis (H2) made on $B^*$.
\begin{itemize}
\item[\textbullet]
On the one hand, as $\eom(\Tom)=0$ and $\eom(0)=1$, the  above integral is 
\begin{equation*}
-\la JB^*x,B^*y\ra_{U,U'}.
\end{equation*}
\item[\textbullet]
On the other hand, by differentiating inside the integral, we obtain
\begin{gather*}
\int_0^{\Tom} \eom'(s) \la JB^*e^{-sA^*}x, B^*e^{-sA^*}y \ra_{U,U'} ds-\int_0^{\Tom} \eom(s) \la JB^*e^{-sA^*}A^*x, B^*e^{-sA^*}y \ra_{U,U'} ds \\
-\int_0^{\Tom} \eom(s) \la JB^*e^{-sA^*}x, B^*e^{-sA^*}A^*y \ra_{U,U'} ds.\\
\end{gather*}
\end{itemize}

The formula
\begin{equation*}
(Lx,y)_H:=-\int_0^{\Tom} \eom'(s)\la JB^*e^{-sA^*}\Lom^{-1}x, B^*e^{-sA^*}\Lom^{-1}y \ra_{U,U'} ds
\end{equation*}
defines a positive-definite self-adjoint operator $L\in L(H)$ because 
\begin{equation*}
\forall s\ge 0, \qquad -\eom'(s)\ge 2\omega \eom(s).
\end{equation*}
We set 
\begin{equation*}
C:=\sqrt{L}\in L(H).
\end{equation*}
For $x,y \in H$, we have
\begin{align*}
(Lx,y)_H&=(Cx,Cy)_H\\
&=\la Cx,\wJ Cy \ra_{H,H'}\\
&=\la x,C^*\wJ Cy\ra_{H,H'}
\end{align*}
where $C^*\in L(H')$ is the adjoint of $C$.
We can also remark the important
\footnote{
This estimation is important for the proof of the exponential decay of the solutions.
}
relation between $C$ and $\Lom^{-1}$ :
\begin{equation}\label{CLom}
C^*\wJ C \ge 2\omega \Lom^{-1}.
\end{equation}
Finally the second computation of the integral gives
\begin{align*}
&-(L\Lom x,\Lom y)_H-\la\Lom A^*x,y\ra_{H,H'}-\la\Lom x,A^*y\ra_{H,H'}\\
=&-\la C\Lom x,\widetilde{J}C\Lom y \ra_{H,H'}-\la\Lom A^*x,y\ra_{H,H'}-\la\Lom x,A^*y\ra_{H,H'}.
\end{align*}

Putting together the two computations, we obtain  the following algebraic Riccati equation satisfied by $\Lom$ :
\begin{multline}\label{ARE}
\la\Lom A^*x,y\ra_{H,H'}+\la\Lom x,A^*y\ra_{H,H'}\\
+\la C\Lom x,\widetilde{J}C\Lom y \ra_{H,H'}-\la JB^*x,B^*y\ra_{U,U'}=0,
\end{multline}
first for $x, y \in D((A^*)^2)$ and then for $x, y \in D(A^*)$ by density of $D((A^*)^2)$ in $D(A^*)$ for the norm $\|\cdot\|_{D(A^*)}$.

\bigskip
\emph{An integral form of the algebraic Riccati equation.}
We rewrite the Riccati equation \eqref{ARE} in an integral form, verified for $x, y \in H$ instead of $x, y \in D(A^*)$.
Set $x, y \in D(A^*).$ The equation \eqref{ARE} applied to $e^{-sA^*}x, e^{-sA^*}y \in D(A^*)$ gives
\begin{multline} \label{R1}
\la\Lom A^* e^{-sA^*}x,e^{-sA^*}y\ra_{H,H'}+\la\Lom e^{-sA^*}x,A^* e^{-sA^*}y\ra_{H,H'}\\+\la C\Lom e^{-sA^*}x,\widetilde{J}C\Lom e^{-sA^*}y\ra_{H,H'}-\la JB^*e^{-sA^*}x,B^* e^{-sA^*}y\ra_{U,U'}=0.
\end{multline}
Integrating \eqref{R1} bewteen $0$ and $t$ gives the following integral form of the Riccati equation \eqref{ARE} :
\begin{multline}\label{RI}
\la\Lom x,y\ra_{H,H'}
=\la \Lom e^{-tA^*}x,e^{-tA^*}y\ra_{H,H'}\\ 
-\int_0^t \la C\Lom e^{-sA^*}x,\widetilde{J}C\Lom e^{-sA^*}y \ra_{H,H'} ds
+\int_0^t \la JB^*e^{-sA^*}x,B^* e^{-sA^*}y\ra_{U,U'}ds.
\end{multline}
This relation remains true for $x, y \in H'$ by density of $D(A^*)$ in $H'$ for the \linebreak norm $\|\cdot\|_{H'}$.

\bigskip
\pagebreak
\emph{Rapid stabilization.}
Now let us recall the main result of \cite{K97}.
\begin{thm}[Komornik, {\cite[p. 1597]{K97}}]
Assume (H1)-(H4) for some $T>0$. Fix $\omega>0$ arbitrarily and set
\begin{equation*}
F:=-JB^*\ILom.
\end{equation*}
Then the operator $A+BF$ generates a strongly continuous group
\footnote{As it was already noted in \cite{K97}, we have to consider this affirmation in a weaker sense. More precisely, we will see that this is true if we replace $A$ by a suitable extension.}
 in H and the solutions of the closed-loop problem
\begin{equation*}
x'=Ax+BFx,\qquad x(0)=x_0
\end{equation*}
satisfy the estimate
\footnote{
$\|\cdot\|_{\om}$ defined by $\|x\|_{\om}^2:=\la \Lom^{-1}x,x\ra_{H',H}$, is a norm on $H$, equivalent to the usual norm thanks to the continuity and coercivity of $\ILom$.
}
\begin{equation*}
\|x(t)\|_{\om}\le \|x_0\|_{\om}e^{-\om t}
\end{equation*}
for all $x_0\in H$ and for all $t\ge 0$.
\end{thm}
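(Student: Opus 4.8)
The plan is to by-pass the unbounded control operator $B$ altogether by transferring the closed-loop problem, via the isomorphism $\Lom$, to an equivalent \emph{dual} problem posed on $H'$, in which the group generator $A^*$ appears perturbed only by a \emph{bounded} operator. Once this reduction is in place, the group property is an off-the-shelf perturbation result and the decay estimate is a short energy computation.

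First I would introduce the dual operator. Since $C\in L(H)$, $\wJ\in L(H,H')$ and $\Lom\in L(H',H)$, the operator $C^*\wJ C\Lom$ belongs to $L(H')$, and by (H1) the operator $-A^*$ generates a strongly continuous group on $H'$. Adding the bounded perturbation $-C^*\wJ C\Lom$ to $-A^*$, the bounded perturbation theorem yields that
\begin{equation*}
\mathcal{A}:=A^*+C^*\wJ C\Lom,\qquad D(\mathcal{A})=D(A^*),
\end{equation*}
is such that $-\mathcal{A}$ generates a strongly continuous group $(S(t))_{t\in\R}$ on $H'$; the \emph{dual closed-loop problem} is $y'=-\mathcal{A}y$, $y(0)=y_0$, with solution $y(t)=S(t)y_0$. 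Conjugating by $\Lom$, the family $G(t):=\Lom S(t)\ILom$ is then a strongly continuous group on $H$ whose generator is $-\Lom(A^*+C^*\wJ C\Lom)\ILom$ on the domain $\Lom(D(A^*))$, and I would \emph{define} the mild solution of the closed-loop problem to be $x(t):=G(t)x_0=\Lom S(t)\ILom x_0$.

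Next I would check that $G$ is really ``the'' closed-loop group, i.e. that $-\Lom(A^*+C^*\wJ C\Lom)\ILom$ coincides with $A+BF$ in the appropriate sense. Substituting $\Lom A^*=-A\Lom-\Lom C^*\wJ C\Lom+BJB^*$, which is exactly the algebraic Riccati equation \eqref{ARE}, one finds formally
\begin{equation*}
-\Lom(A^*+C^*\wJ C\Lom)\ILom=A-BJB^*\ILom=A+BF,
\end{equation*}
the two unbounded contributions cancelling. Making this rigorous is the main obstacle: because $B\in L(U,D(A^*)')$ is unbounded, neither $A+BF$ nor $BF=-BJB^*\ILom$ is $H$-valued, so $A$ has to be replaced by a suitable extension $\wA$ (itself $D(A^*)'$-valued) for which $\wA+BF$ lands back in $H$, and one must verify that $\wA+BF$ is closed, densely defined, and is precisely the generator of $G$. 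I would also have to justify that $x(t)=G(t)x_0$ obeys the expected variation-of-constants formula; here the \emph{integral} form \eqref{RI} of the Riccati equation, together with an adaptation of F. Flandoli's representation formula, is what absorbs the unboundedness of $B$. This is carried out in Theorems \ref{thmWP} and \ref{thmWP2}.

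Finally, for the decay estimate I would argue at the level of the dual problem. Let $y_0\in D(A^*)$, so that $y(t)=S(t)y_0$ is a classical solution of $y'=-\mathcal{A}y$ staying in $D(A^*)$; writing $y=y(t)$ and differentiating, then using the self-adjointness of $\Lom$, the Riccati equation \eqref{ARE} with $x=y$, and $\la C\Lom y,\wJ C\Lom y\ra_{H,H'}=\|C\Lom y\|_H^2$, one obtains
\begin{equation*}
\frac{d}{dt}\la\Lom y,y\ra_{H,H'}=-\la JB^*y,B^*y\ra_{U,U'}-\|C\Lom y\|_H^2\le-\|C\Lom y\|_H^2 .
\end{equation*}
By the key relation \eqref{CLom} one has $\|C\Lom y\|_H^2=\la C^*\wJ C\Lom y,\Lom y\ra_{H',H}\ge 2\om\la\ILom\Lom y,\Lom y\ra_{H',H}=2\om\la\Lom y,y\ra_{H,H'}$, so Gr\"onwall's lemma gives $\la\Lom y(t),y(t)\ra_{H,H'}\le e^{-2\om t}\la\Lom y_0,y_0\ra_{H,H'}$. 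Since $y=\ILom x$, the left-hand side equals $\|x(t)\|_\om^2$ and the right-hand side equals $e^{-2\om t}\|x_0\|_\om^2$, which is the claimed estimate for $x_0\in\Lom(D(A^*))$; as $\Lom(D(A^*))$ is dense in $H$ and $G(t)$ and $\|\cdot\|_\om$ are continuous, it extends to every $x_0\in H$. (The exponential decay is in fact obtained in \cite{K97} from a related formula, justified here as Proposition \ref{RepIL}.)
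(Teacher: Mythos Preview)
Your well-posedness argument is exactly the paper's: transfer to the dual problem $y'=-(A^*+C^*\wJ C\Lom)y$ on $H'$ via conjugation by $\Lom$, invoke the bounded-perturbation theorem to get the group $S(t)$ ($=V(t)$ in the paper), set $G(t)=\Lom S(t)\ILom$ ($=U(t)$), and appeal to Theorems~\ref{thmWP} and~\ref{thmWP2} for the variation-of-constants formula and the identification of the generator with $\wA+BF$.

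Your decay argument, however, follows a genuinely different route. The paper establishes the \emph{integral} identity of Proposition~\ref{RepIL},
\[
\la\ILom x,x\ra=\la\ILom U(t)x,U(t)x\ra+\int_0^t\|CU(s)x\|_H^2\,ds+\int_0^t\|B^*\ILom U(s)x\|_{U'}^2\,ds,
\]
valid for every $x\in H$, and then applies \eqref{CLom} and a Gronwall lemma. You instead differentiate $t\mapsto\la\Lom y(t),y(t)\ra$ along a \emph{classical} dual trajectory $y(t)=S(t)y_0$ with $y_0\in D(A^*)$, use the pointwise Riccati equation~\eqref{ARE} to obtain $\frac{d}{dt}\la\Lom y,y\ra=-\|B^*y\|_{U'}^2-\|C\Lom y\|_H^2$, apply \eqref{CLom}, and close by density. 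This is correct: the invariance of $D(A^*)$ under $S(t)$ keeps $y(t)$ in the domain where~\eqref{ARE} and $B^*y$ make sense, and the final density step is legitimate because $\|\cdot\|_\om$ is equivalent to $\|\cdot\|_H$ and $G(t)\in L(H)$. Your approach is shorter and more elementary, since it avoids the chain Lemma~\ref{RepL1Lem} $\Rightarrow$ Proposition~\ref{RepL2} $\Rightarrow$ Proposition~\ref{RepIL}. What the paper's route buys is an identity valid for \emph{all} $x\in H$ without any regularity or density argument---and, more to the point of this paper, a rigorous justification of the very formula (3.11) that Komornik used in \cite{K97}, which was one of the paper's stated goals. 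Your differential computation is in fact the formal derivative of Proposition~\ref{RepIL}; the paper's integral formulation is precisely what makes that formal step rigorous for arbitrary data.
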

\subsection{Well-posedness of the open-loop problem}\label{WPOL}
In this paragraph, we recall some results about the well-posedness of the open-loop problem
\begin{equation} \label{openloop}
\begin{cases}
x'(t)=Ax(t)+Bu(t), \qquad t \in \R, \\
x(0)=x_0,
\end{cases}
\end{equation}
where $u\in L^2_{\text{loc}}(\R;U)$. We would like to define a \emph{mild solution} of this problem that is continuous and takes its values in $H$. The dificulty comes from the fact that the control operator is unbounded and takes its values in the larger space $D(A^*)'$. The next proposition will give an answer.
\footnote{
This result is due to I. Lasiecka and R. Triggiani  who first proved it in the case of hyperbolic equations with Dirichlet boundary conditions (see \cite{LT1983}).
}
\begin{prop}[{\cite[p. 259-260]{BDP}}, {\cite[p. 648]{LT2} }] 
Fix $T>0$ and set 
\begin{equation*}
z(t):=\int_0^t e^{(t-s)A}Eu(s)ds,\qquad -T\le t\le T.
\end{equation*}
Then
\begin{itemize}
\item[\textbullet]
$z(t)\in D(A)$ for all $-T\le t\le T$;
\item[\textbullet]
$\|(A+\lambda I)z(t)\|_H\le k\|u\|_{L^2(-T,T;U)}$ for all $-T\le t\le T$, where $k>0$ is a constant independant of $u$;
\item[\textbullet]
$(A+\lambda I)z\in C([-T,T];H)$.
\end{itemize}
\end{prop}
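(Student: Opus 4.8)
The plan is to establish the three bullets essentially simultaneously, by combining the hypothesis (H2) — which says $B = (A+\lambda I)E$ as operators, or equivalently $B^* = E^*(A+\lambda I)^*$ — with the duality form of the direct inequality (H3), after re-expressing $z$ in a way that moves the unbounded operator off of $E$. First I would write, using (H2) and the fact that $A$ generates a group commuting (on the appropriate domain) with its resolvent and semigroup,
\begin{equation*}
z(t) = \int_0^t e^{(t-s)A} E u(s)\, ds,
\qquad
(A+\lambda I) z(t) = \int_0^t (A+\lambda I) e^{(t-s)A} E u(s)\, ds
= -\frac{d}{ds}\!\!\int_0^t e^{(t-s)A} (A+\lambda I)^{-1}(A+\lambda I) E u(s)\, ds,
\end{equation*}
the point being that one expects $(A+\lambda I) z(t)$ to be computed by pairing against test functions and transposing. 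Concretely, for $\varphi \in D(A^*)$ one has $\la (A+\lambda I) e^{(t-s)A} E u(s), \varphi \ra_{H,H'} = \la E u(s), e^{(t-s)A^*}(A+\lambda I)^* \varphi \ra = \la u(s), B^* e^{(t-s)A^*}\varphi \ra_{U,U'}$ after identifying $B^* = E^*(A+\lambda I)^*$. Integrating in $s$ and applying Cauchy–Schwarz together with (H3) on the interval $(-T,T)$ (legitimate by the Remark following (H4)) gives
\begin{equation*}
\big| \la (A+\lambda I) z(t), \varphi \ra_{H,H'} \big|
\le \|u\|_{L^2(-T,T;U)} \Big( \int_{-T}^{T} \| B^* e^{-rA^*}\varphi\|_{U'}^2\, dr \Big)^{1/2}
\le k \, \|u\|_{L^2(-T,T;U)} \, \|\varphi\|_{H'}.
\end{equation*}
Since $D(A^*)$ is dense in $H'$, this shows the functional $\varphi \mapsto \la (A+\lambda I) z(t), \varphi\ra$ extends to a bounded functional on $H'$, i.e. $(A+\lambda I) z(t) \in H$ with the stated bound; and because $(A+\lambda I)z(t)\in H$ with $z(t)\in H$, one gets $z(t)\in D(A)$. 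This disposes of the first two bullets at once.

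For the third bullet — continuity of $t \mapsto (A+\lambda I) z(t)$ into $H$ — I would first prove it on a dense set of controls, say $u \in C^1_c$, where one can differentiate under the integral and use the group property to write $(A+\lambda I)z(t)$ as an honest $H$-valued continuous function (integration by parts in $s$ trades the $A$-derivative for a $u'$ term plus boundary terms $e^{tA}Eu(0)$ and $Eu(t)$, all manifestly continuous in $t$). Then the uniform bound from the second bullet, applied to $u_n \to u$ in $L^2(-T,T;U)$, shows $(A+\lambda I) z_n \to (A+\lambda I) z$ uniformly on $[-T,T]$, so the limit is continuous.

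The main obstacle I anticipate is purely technical rather than conceptual: making the formal manipulation $(A+\lambda I)e^{(t-s)A} = -\frac{d}{ds} e^{(t-s)A}(A+\lambda I)\cdot(A+\lambda I)^{-1}$ rigorous when $Eu(s)$ need not lie in $D(A)$, i.e. justifying that the weak identity above really identifies $(A+\lambda I)z(t)$ — which forces one to work in the transposed/duality picture throughout rather than naively applying $A+\lambda I$ under the integral sign. This is exactly the Lasiecka–Triggiani device, and the cleanest route is to take the duality estimate as the \emph{definition} of $(A+\lambda I)z(t) \in H$ and then check consistency, rather than to differentiate $z$ directly. Everything else — Cauchy–Schwarz, density, the passage from one time interval to another — is routine given (H1)–(H3).
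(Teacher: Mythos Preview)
The paper does not supply its own proof of this proposition: it is quoted as a known result from \cite{BDP} and \cite{LT2} and is immediately followed by the definition of the mild solution, with no intervening argument. So there is no in-paper proof to compare against.

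That said, your sketch is the standard Lasiecka--Triggiani duality argument and is essentially correct. The key identity
\[
\la z(t),(A+\lambda I)^*\varphi\ra_{H,H'}=\int_0^t \la u(s),B^*e^{(t-s)A^*}\varphi\ra_{U,U'}\,ds,\qquad \varphi\in D(A^*),
\]
followed by Cauchy--Schwarz and the two-sided version of (H3), gives the bound $|\la z(t),(A+\lambda I)^*\varphi\ra|\le k\|u\|_{L^2(-T,T;U)}\|\varphi\|_{H'}$; since $A$ is closed and densely defined, boundedness of $\varphi\mapsto\la z(t),A^*\varphi\ra$ on $(D(A^*),\|\cdot\|_{H'})$ forces $z(t)\in D((A^*)^*)=D(A)$, which yields the first two bullets. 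Your density-plus-uniform-bound argument for continuity is also the right one. One small cosmetic point: it is cleaner to start directly from $\la z(t),(A+\lambda I)^*\varphi\ra$ rather than writing $\la (A+\lambda I)z(t),\varphi\ra$ before you know the latter makes sense --- you flag this yourself at the end, but the opening display with the $-\frac{d}{ds}$ manipulation is unnecessary and slightly misleading, since the whole point is to avoid applying $A+\lambda I$ under the integral.
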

\begin{deff} 
We define \emph{the mild solution} of \eqref{openloop} as the application
\begin{equation}\label{VC}
x(t)=e^{tA}x_0+(A+\lambda I)\int_0^te^{(t-s)A}Eu(s)ds
\end{equation}
which is continuous on $\R$ with values in $H$.
\end{deff}
\begin{rk} 
The relation \eqref{VC} is a variation-of-constants-type formula. If $B$ is bounded, this relation corresponds to 
\begin{equation*}
x(t)=e^{tA}x_0+\int_0^te^{(t-s)A}Bu(s)ds.
\end{equation*}
Moreover we can also write the relation \eqref{VC} by using the duality pairing :
\begin{equation*}
\la x(t),y\ra_{H,H'}=\la x_0,e^{tA^*}y\ra_{H,H'}+\int_0^t\la u(s),B^*e^{(t-s)A^*}y\ra_{U,U'}ds,
\end{equation*}
for all $y\in D(A^*)$.
\end{rk}

We end this section by recalling a regularity result. It concerns the solutions of the open-loop problem in the dual space $H'$
\begin{equation} \label{openloop2}
\begin{cases}
y'(t)=-A^*y(t)+g(t),\qquad t\in\R, \\
y(0)=y_0,
\end{cases}
\end{equation}
where $g \in L^1_{\text{loc}}(\R;H')$. This time, the source term does not involve any unbounded operator and the mild solution of \eqref{openloop2} is defined by the ``standard''  variation of constants formula (see \cite[p. 107]{Pazy}
\begin{equation}\label{VC2}
y(t)=e^{-tA^*}y_0+\int_0^t e^{-(t-r)A^*}g(r)dr,
\end{equation}
which is a continuous function from $\R$ to $H'$. Thanks to the  direct inequality stated in (H3), we can apply the operator $B^*$ to the solution of the homogeneous problem associated to \eqref{openloop2} (put $g=0$ in \eqref{openloop2}) and see this new function as an element of $L^2_{\text{loc}}(\R;U')$. Actually, this operation can  be generalized to the solutions of the inhomogeneous problem ($g$ can be $\neq 0$). We recall this result
\footnote{
This result was firstly stated in \cite{LT1983} in the case of hyperbolic equations with Dirichlet boundary conditions.
}
 in the
\begin{prop}[{\cite[pp. 92-93]{Flan87}} , {\cite[p. 648]{LT2}} ]\label{extreg}
 Fix $T>0$. There exists a constant $c>0$ such that for all $y_0 \in D(A^*)$ and all $g\in L^1(-T,T;D(A^*))$ we have the estimation 
\begin{equation*}
\int_{-T}^T \|B^*y(t)\|_{U'}^2 dt \le c\big(\|y_0\|_{H'}^2+\|g\|^2_{L^1(-T,T;U')}\big),
\end{equation*}
where $y$ is defined by \eqref{VC2}. By density, we can say that this estimation remains true for all initial data $y_0\in H'$ and all source terms $g\in L^1(-T,T;H')$.
\end{prop}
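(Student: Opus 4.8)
The plan is to decompose the mild solution \eqref{VC2} as $y=y_h+z$, where $y_h(t):=e^{-tA^*}y_0$ solves the homogeneous problem and $z(t):=\int_0^t e^{-(t-r)A^*}g(r)\,dr$, and to bound the two contributions to $\int_{-T}^T\|B^*y(t)\|_{U'}^2\,dt$ separately. For $y_h$ there is nothing new: by (H3) together with the Remark following (H4) — which extends the direct inequality to the symmetric interval $(-T,T)$ and, by density, to every datum in $H'$ — one has $\int_{-T}^T\|B^*e^{-tA^*}y_0\|_{U'}^2\,dt\le c_1'(T)\|y_0\|_{H'}^2$ for all $y_0\in H'$. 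The whole content of the proposition is therefore the estimate of $B^*z$, and, following Flandoli, the tool is a superposition argument. This is also the reason to first restrict to $y_0\in D(A^*)$ and $g\in L^1(-T,T;D(A^*))$: in that case $g$ and $A^*g$ both lie in $L^1(-T,T;H')$, so the closedness of $A^*$ and the invariance of $D(A^*)$ under the group yield $z(t)\in D(A^*)$, hence $y(t)\in D(A^*)$ and $B^*y(t)\in U'$ is genuinely defined pointwise via (H2).

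The first real step is to make the superposition identity rigorous. Writing $B^*=E^*(A+\lambda I)^*$ as in (H2), with $E^*\in L(H',U')$ bounded and $(A+\lambda I)^*$ closed, one pulls $E^*$ out of the Bochner integral and commutes $(A+\lambda I)^*$ with $e^{-(t-r)A^*}$ and with $\int_0^t$ (legitimate because $g(r)\in D(A^*)$ a.e.), obtaining, for $y_0\in D(A^*)$ and $g\in L^1(-T,T;D(A^*))$,
\begin{equation*}
B^*y(t)=B^*e^{-tA^*}y_0+\int_0^t B^*e^{-(t-r)A^*}g(r)\,dr .
\end{equation*}
For each fixed $r\in(-T,T)$ the function $\sigma\mapsto B^*e^{-\sigma A^*}g(r)$ is, again by (H3) and the Remark, square integrable on $(-2T,2T)$ with $\int_{-2T}^{2T}\|B^*e^{-\sigma A^*}g(r)\|_{U'}^2\,d\sigma\le c_1'(2T)\|g(r)\|_{H'}^2$; in particular, for every such $r$,
\begin{equation*}
\Big\|\,t\mapsto B^*e^{-(t-r)A^*}g(r)\,\Big\|_{L^2(-T,T;U')}\le \sqrt{c_1'(2T)}\,\|g(r)\|_{H'}.
\end{equation*}

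The key estimate is then a direct application of the generalized Minkowski inequality for the Bochner norm of $L^2(-T,T;U')$. Writing $z(t)=\int_{-T}^T\varepsilon(t,r)\,e^{-(t-r)A^*}g(r)\,dr$ with $|\varepsilon(t,r)|\le 1$ (the factor $\varepsilon$ only records the orientation of $\int_0^t$ and confines $|t-r|$ to $[0,2T]$), one gets
\begin{equation*}
\|B^*z\|_{L^2(-T,T;U')}\le \int_{-T}^T\Big\|\,t\mapsto B^*e^{-(t-r)A^*}g(r)\,\Big\|_{L^2(-T,T;U')}\,dr\le \sqrt{c_1'(2T)}\,\|g\|_{L^1(-T,T;H')},
\end{equation*}
after the routine check that $r\mapsto\|B^*e^{-(\cdot-r)A^*}g(r)\|_{L^2_t}$ is measurable (the integrand is continuous in $t$ and measurable in $r$). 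Combining with the homogeneous bound and $\|a+b\|^2\le 2\|a\|^2+2\|b\|^2$ gives the asserted inequality — with $\|g\|_{L^1(-T,T;H')}$ on the right, in accordance with the density statement — for $y_0\in D(A^*)$ and $g\in L^1(-T,T;D(A^*))$, say with $c:=2\max\{c_1'(T),c_1'(2T)\}$.

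It then only remains to pass to general data by density: $D(A^*)$ is dense in $H'$, $L^1(-T,T;D(A^*))$ is dense in $L^1(-T,T;H')$, and the linear map $(y_0,g)\mapsto B^*y$ constructed above is continuous from these subspaces (equipped with the $H'$ and $L^1(-T,T;H')$ topologies, respectively) into $L^2(-T,T;U')$; hence it extends uniquely and continuously, and the extension — compatible on the homogeneous part with the construction in the Remark — is the definition of $B^*y$ for arbitrary $y_0\in H'$ and $g\in L^1(-T,T;H')$. I expect the main obstacle to be precisely the rigorous justification of the superposition identity, i.e.\ that the unbounded operator $B^*$ may be moved inside $\int_0^t$: everything after that is Fubini--Tonelli and Minkowski bookkeeping, but commuting $B^*$ with the integral genuinely uses the factorization $B^*=E^*(A+\lambda I)^*$ from (H2), the closedness of $A^*$, and the invariance of $D(A^*)$ under the group.
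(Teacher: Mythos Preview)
The paper does not actually prove this proposition: it is stated with citations to Flandoli \cite{Flan87} and Lasiecka--Triggiani \cite{LT2} and then used as a black box, so there is no ``paper's own proof'' to compare against. Your argument is correct and is essentially the standard one found in those references: split $y=y_h+z$, handle the homogeneous part by (H3), and for the inhomogeneous part commute $B^*=E^*(A+\lambda I)^*$ through the Bochner integral (legitimate for $g\in L^1(-T,T;D(A^*))$ by closedness of $A^*$ and invariance of $D(A^*)$ under the group), then apply the generalized Minkowski inequality together with the direct inequality on the larger interval $(-2T,2T)$. You also correctly note that the $L^1(-T,T;U')$ norm appearing in the displayed inequality is a typographical slip for $L^1(-T,T;H')$, which is what the density argument and the proof require.
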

\section{Well-posedness of the closed-loop problem}\label{WPCL}
The aim of this section is to give a notion of solution to the closed-loop problem 
\begin{equation}\label{closedloop}
\begin{cases}
x'(t)=Ax(t)-BJB^*\ILom x(t),\qquad t \in \R, \\
x(0)=x_0.
\end{cases}
\end{equation}
As for the open-problem \eqref{openloop}, we can not use directly a variation of constants formula because of the unbounded perturbation ($-BJB^*\ILom$) of the infinitesimal generator \nolinebreak $A$.

Let us give the main idea for the  well-posedness of \eqref{closedloop}. 
The Riccati equation \eqref{ARE} can be rewritten \emph{formally} as
\begin{equation*}
A\Lom+\Lom A^*+\Lom C^*\wJ C\Lom -BJB^*=0.
\end{equation*}
By multiplying the above equation on both side by $\ILom$, we get
\begin{equation}\label{FARE2}
\ILom A+ A^*\ILom+C^*\wJ C -\ILom BJB^*\ILom=0.
\end{equation}
Now we multiply the operator $A-BJB^*\ILom$ on the left by $\ILom$ and on the right by $\Lom$ to get
\begin{equation*}
\ILom(A-BJB^*\ILom)\Lom=\ILom A \Lom -\ILom BJB^*=-A^*-C^*\wJ C\Lom,
\end{equation*}
the last equality being a consequence of \eqref{FARE2}.
\begin{rk}
The two operators $A-BJB^*$ and $-A^*-C^*\wJ C\Lom$ are (formally) \emph{conjugated} by the operator $\Lom$. 
\end{rk}
The advantage of working with the conjugated operator is that the perturbation ($-C^*\wJ C\Lom$)  is bounded. We are going to analyze the well-posedness of the closed-loop problem \eqref{closedloop} by using the solutions of the ``conjugated'' closed-loop problem
\begin{equation}\label{closedloop2}
\begin{cases}
y'(t)=-A^*y(t)-C^*\wJ C\Lom y(t), \qquad t \in \R, \\ 
y(0)=y_0,
\end{cases}
\end{equation}
whose well-posedness is already known.

The perturbation being bounded, the operator $-A^*-C^*\wJ C\Lom$, defined on $D(A^*)$ is the infinitesimal generator of a  strongly continuous group $V(t)$ on $H'$ (see \cite[p. 22 and p. 76]{Pazy}). Moreover, for all $t\in \R$ and all $y_0\in H'$ we have
\begin{equation}\label{Vmild}
V(t)y_0=e^{-tA^*}y_0-\int_0^t e^{-(t-r)A^*}C^*\wJ C\Lom V(r)y_0dr.
\end{equation}
\begin{deff}
Let $x_0 \in H$. We define \emph{the mild solution} of \eqref{closedloop} by
\begin{equation*}
U(t)x_0:=\Lom V(t) \ILom x_0. 
\end{equation*} 
\end{deff}
Now we prove that this notion of solution is ``coherent'' with the closed-loop problem \eqref{closedloop} in the sense that it satisfies a variation of constants formula, close to the one that we would formally use.
\begin{thm}\label{thmWP}
$U(t)$ is a strongly continuous group in $H$ whose generator is 
\begin{equation*}
A_U:=\Lom(-A^*-C^*\wJ C\Lom)\ILom; \qquad D(A_U)=\Lom D(A^*).
\end{equation*}
Moreover, it satisfies the variation of constants formula
\begin{equation}\label{repU}
\la U(t)x_0,y\ra=\la e^{tA}x_0,y\ra-\int_0^t\la JB^*\Lom^{-1}U(r)x_0,B^*e^{(t-r)A^*}y\ra dr,
\end{equation}
for all $x_0 \in H$ and $y\in H'.$
\end{thm}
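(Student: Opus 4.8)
\emph{Plan of proof.} The statement has two parts: an abstract $C_0$-group assertion and the representation formula \eqref{repU}. For the first, I would simply observe that $U(t)=\Lom V(t)\ILom$ is conjugate to the $C_0$-group $V(t)$ on $H'$ through the isomorphism $\Lom\in L(H',H)$: the relations $U(0)=I$ and $U(t+s)=U(t)U(s)$ follow from $\ILom\Lom=I$ and the group law for $V$; strong continuity on $H$ is inherited from that of $V$ on $H'$ via the boundedness of $\Lom$ and $\ILom$; and the generator is $A_U=\Lom G_V\ILom$ with $D(A_U)=\Lom D(G_V)$, where $G_V=-A^*-C^*\wJ C\Lom$ has domain $D(A^*)$ by hypothesis. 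Hence $D(A_U)=\Lom D(A^*)$ and $A_U=\Lom(-A^*-C^*\wJ C\Lom)\ILom$, as claimed.

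For \eqref{repU} I would first treat regular data $x_0\in D(A_U)=\Lom D(A^*)$ and $y\in D(A^*)$, and then pass to the general case by density. Fix such $x_0,y$ and set $\phi(r):=\ILom U(r)x_0=V(r)\ILom x_0$. Since $\ILom x_0\in D(G_V)=D(A^*)$, the curve $\phi$ is of class $C^1$ from $\R$ to $H'$ with values in $D(A^*)$ and satisfies $\phi'=(-A^*-C^*\wJ C\Lom)\phi$, while $r\mapsto e^{(t-r)A^*}y$ is $C^1$ from $\R$ to $H'$ with values in $D(A^*)$ and derivative $-A^*e^{(t-r)A^*}y$. Therefore $r\mapsto\la U(r)x_0,e^{(t-r)A^*}y\ra=\la\Lom\phi(r),e^{(t-r)A^*}y\ra$ is $C^1$ on $[0,t]$, and the product rule together with the self-adjointness of $\Lom$ and $C$ gives
\[
\frac{d}{dr}\la U(r)x_0,e^{(t-r)A^*}y\ra=-\Big(\la\Lom A^*\phi(r),e^{(t-r)A^*}y\ra+\la\Lom\phi(r),A^*e^{(t-r)A^*}y\ra+\la C\Lom\phi(r),\wJ C\Lom e^{(t-r)A^*}y\ra\Big).
\]
The crux is to recognise the parenthesis: by the algebraic Riccati equation \eqref{ARE} applied to the pair $(\phi(r),e^{(t-r)A^*}y)\in D(A^*)\times D(A^*)$ it equals $\la JB^*\phi(r),B^*e^{(t-r)A^*}y\ra=\la JB^*\ILom U(r)x_0,B^*e^{(t-r)A^*}y\ra$. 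Integrating over $[0,t]$ and using $U(0)x_0=x_0$ and $\la x_0,e^{tA^*}y\ra=\la e^{tA}x_0,y\ra$ yields \eqref{repU} for this class of data.

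It then remains to extend \eqref{repU} to arbitrary $x_0\in H$ and $y\in H'$. Since $D(A^*)$ is dense in $H'$ and $\Lom$ is an isomorphism, $\Lom D(A^*)$ is dense in $H$, so it suffices to check that both sides of \eqref{repU} are bounded bilinear forms on $H\times H'$. This is immediate for $(x_0,y)\mapsto\la U(t)x_0,y\ra$ and $(x_0,y)\mapsto\la e^{tA}x_0,y\ra$. For the integral term, recalling \eqref{Vmild}, $\phi=V(\cdot)\ILom x_0$ is the mild solution of \eqref{openloop2} with initial datum $\ILom x_0\in H'$ and source $g=-C^*\wJ C\Lom\phi\in C([0,t];H')$, whose $L^1(0,t;H')$-norm is at most a constant (depending on $t$) times $\|x_0\|_H$; Proposition \ref{extreg} then shows that $r\mapsto B^*\ILom U(r)x_0$ lies in $L^2(0,t;U')$ with norm bounded by a constant times $\|x_0\|_H$, while the remark following (H4) shows that $r\mapsto B^*e^{(t-r)A^*}y$ lies in $L^2(0,t;U')$ with norm bounded by a constant times $\|y\|_{H'}$; a Cauchy--Schwarz estimate then bounds the integral term by $C(t)\|x_0\|_H\|y\|_{H'}$, and bilinearity is clear, so \eqref{repU} extends by continuity. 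I expect this last passage to be the main obstacle: for general $x_0\in H$ the vector $\ILom U(r)x_0$ need not lie in $D(A^*)$, so $B^*\ILom U(r)x_0$ has no pointwise meaning, and one genuinely needs the hidden-regularity estimate of Proposition \ref{extreg}, applied to the dual trajectory $\phi$, both to make sense of the integrand in \eqref{repU} and to control it; everything else (the $C^1$-differentiation, the identification through \eqref{ARE}, and the bookkeeping with the pairings between $H,H',U,U'$ and the isomorphisms $J,\wJ,\Lom$) is routine.
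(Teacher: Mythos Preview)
Your argument is correct. The treatment of the group assertion is identical to the paper's (conjugation of $V(t)$ by the isomorphism $\Lom$). For the variation-of-constants formula \eqref{repU}, however, you take a genuinely different route.

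The paper does \emph{not} differentiate. It first establishes Lemma~\ref{RepL1Lem}, namely
\[
\la\Lom x,y\ra=\la\Lom V(t)x,e^{-tA^*}y\ra+\int_0^t\la JB^*V(s)x,B^*e^{-sA^*}y\ra\,ds
\]
directly for all $x,y\in H'$, by starting from the \emph{integral} Riccati identity \eqref{RI}, substituting the Duhamel formula \eqref{Vmild} for $V$, and then showing that the four resulting cross terms cancel; this last cancellation is the technical heart of the paper's proof and requires Fubini together with the resolvent approximations $I_n=n(nI-A^*)^{-1}$ to handle the unbounded $B^*$. Formula \eqref{repU} then follows by the substitution $x\mapsto\ILom x_0$, $y\mapsto e^{tA^*}y$.

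You instead work with regular data $x_0\in\Lom D(A^*)$, $y\in D(A^*)$, differentiate $r\mapsto\la\Lom V(r)\ILom x_0,e^{(t-r)A^*}y\ra$, and recognise the derivative via the \emph{pointwise} Riccati equation \eqref{ARE}; then you close by a density argument, controlling the integral term through Proposition~\ref{extreg}. This is shorter and avoids the $I_n$-approximation machinery entirely. What you lose is Lemma~\ref{RepL1Lem} as a free-standing statement for all $x,y\in H'$; the paper reuses that lemma later, in the proof of Proposition~\ref{RepL2}, so its extra work is not wasted. Your approach would yield Lemma~\ref{RepL1Lem} as well, after the same substitution in reverse, but only once the density step has been performed.

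One minor comment: when you invoke ``self-adjointness of $C$'' to pass from $\la\Lom C^*\wJ C\Lom\phi(r),e^{(t-r)A^*}y\ra$ to $\la C\Lom\phi(r),\wJ C\Lom e^{(t-r)A^*}y\ra$, what is actually used is the definition of the Banach adjoint $C^*\in L(H')$ together with the symmetry of $\Lom$ and of the Riesz pairing $\la\cdot,\wJ\cdot\ra$; the self-adjointness of $C$ itself is not needed there.
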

\begin{rk}
The formula \eqref{repU} does not mean that $A-BJB^*\ILom$ is the infinitesimal generator of a group (or even a semigroup) but it justifies the choice of $U(t)$ to define the mild solution of the closed-loop problem \eqref{closedloop}. To justify that the integral in \eqref{repU} is meaningful, see the remark after the Lemma just below.
\end{rk}
\begin{rk}
Formula \eqref{repU} can be rewritten as
\begin{equation}\label{repU2}
U(t)x_0=e^{tA}x_0-(A+\lambda I)\int_0^t e^{(t-r)A}EJB^*\Lom^{-1}U(r)x_0dr,
\end{equation}
for all $x_0\in H$. We can show \eqref{repU2} first for $x_0\in D(A^*)$ and  extend it by density. 
\end{rk}

\pagebreak
The proof of Theorem \ref{thmWP} relies on the following representation formula of $\Lom$.
\begin{lem}\label{RepL1Lem}
Set $x,y \in H'$ and $t \in \R$. Then 
\begin{equation}\label{RepL1}
\la\Lom x,y\ra_{H,H'}=\la\Lom V(t)x,e^{-tA^*}y\ra_{H,H'}+\int_0^t\la JB^*V(s)x,B^*e^{-sA^*}y\ra_{U,U'} ds.
\end{equation}
\end{lem}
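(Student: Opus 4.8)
The plan is to derive \eqref{RepL1} from the integral form of the Riccati equation \eqref{RI} together with the mild formula \eqref{Vmild} for the group $V(t)$. First I would fix $x,y\in D(A^*)$ (the general case $x,y\in H'$ follows by density, using the direct inequality (H3) to control the boundary term $B^*e^{-sA^*}y$ in $L^2_{\mathrm{loc}}$ and the fact that $\Lom$, $e^{-tA^*}$, $V(t)$ are all bounded). Introduce the function
\begin{equation*}
\varphi(s):=\la\Lom V(s)x,e^{-(t-s)(-A^*)}y\ra_{H,H'}=\la\Lom V(s)x,e^{(s-t)A^*}y\ra_{H,H'},\qquad 0\le s\le t,
\end{equation*}
so that $\varphi(0)=\la\Lom V(0)x,e^{-tA^*}y\ra=\la\Lom x,e^{-tA^*}y\ra$ and $\varphi(t)=\la\Lom V(t)x,y\ra$. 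If I can show $\varphi$ is absolutely continuous with
\begin{equation*}
\varphi'(s)=\la JB^*V(s)x,B^*e^{-(t-s)A^*}y\ra_{U,U'}
\end{equation*}
wait — this is not quite the integrand in \eqref{RepL1}, which has $B^*e^{-sA^*}y$, not $B^*e^{-(t-s)A^*}y$. So the bookkeeping needs the mirror arrangement: set instead
\begin{equation*}
\varphi(s):=\la\Lom V(t-s)x,e^{-sA^*}y\ra_{H,H'},\qquad 0\le s\le t,
\end{equation*}
with $\varphi(0)=\la\Lom V(t)x,y\ra$ and $\varphi(t)=\la\Lom x,e^{-tA^*}y\ra$ (using $V(0)=I$), so that \eqref{RepL1} reads
\begin{equation*}
\la\Lom x,y\ra_{H,H'}=\varphi(0)+\int_0^t\la JB^*V(s)x,B^*e^{-sA^*}y\ra_{U,U'}ds.
\end{equation*}
But $\varphi(0)$ is $\la\Lom V(t)x,y\ra$, not $\la\Lom V(t)x,e^{-tA^*}y\ra$; so the correct auxiliary function, designed to land exactly on the stated boundary terms, is
\begin{equation*}
\varphi(s):=\la\Lom V(s)x,e^{-sA^*}y\ra_{H,H'}\ \text{?}
\end{equation*}
— no, that gives $\varphi(t)=\la\Lom V(t)x,e^{-tA^*}y\ra$ and $\varphi(0)=\la\Lom x,y\ra$, which is exactly right. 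So the real plan is: with $\varphi(s):=\la\Lom V(s)x,e^{-sA^*}y\ra_{H,H'}$, show $\varphi(0)-\varphi(t)=\int_0^t\la JB^*V(s)x,B^*e^{-sA^*}y\ra\,ds$, i.e. $-\varphi'(s)=\la JB^*V(s)x,B^*e^{-sA^*}y\ra$.

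Differentiating $\varphi$ produces three contributions via the product rule: (i) the $-A^*$ part of $V$ pairing against $e^{-sA^*}y$, (ii) the bounded perturbation $-C^*\wJ C\Lom$ acting inside $V$, and (iii) the $-A^*$ generator acting on $e^{-sA^*}y$. Contributions (i) and (iii) combine to $-\la\Lom A^*V(s)x,e^{-sA^*}y\ra-\la\Lom V(s)x,A^*e^{-sA^*}y\ra$, and contribution (ii) gives $-\la\Lom C^*\wJ C\Lom V(s)x,e^{-sA^*}y\ra=-\la C\Lom V(s)x,\wJ C\Lom e^{-sA^*}y\ra$ after moving $\Lom$ and $C^*$ across the pairing. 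Now I apply the algebraic Riccati equation \eqref{ARE}, with the roles of its arguments set to $V(s)x$ and $e^{-sA^*}y$ (both in $D(A^*)$ for a.e. $s$, at least after the usual regularization argument — this is where one should first take $x,y\in D((A^*)^2)$ or use the dense-domain argument behind \eqref{ARE}), which says precisely that
\begin{equation*}
\la\Lom A^*V(s)x,e^{-sA^*}y\ra+\la\Lom V(s)x,A^*e^{-sA^*}y\ra+\la C\Lom V(s)x,\wJ C\Lom e^{-sA^*}y\ra=\la JB^*V(s)x,B^*e^{-sA^*}y\ra.
\end{equation*}
Hence $-\varphi'(s)$ equals the right-hand side, and integrating from $0$ to $t$ gives \eqref{RepL1}. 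I expect the main obstacle to be the regularity bookkeeping: $V(s)x$ need not lie in $D(A^*)$ unless $x$ does, and even for $x\in D(A^*)$ one must justify differentiating under the pairing and applying \eqref{ARE} pointwise in $s$. The cleanest route is to prove the identity first for $x\in D(A^*)$, $y\in D(A^*)$ using that $s\mapsto V(s)x$ is then a classical solution of \eqref{closedloop2} (so $C^1$ into $H'$ with values in $D(A^*)$), differentiate rigorously, invoke \eqref{ARE}, and then pass to $x,y\in H'$ by density — controlling the boundary terms by boundedness of the operators and the integral term by the direct inequality (H3) applied on $(-T,T)$ as in the Remark after (H4), together with the continuity of $s\mapsto V(s)x$ and $s\mapsto\Lom V(s)x$. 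An alternative, perhaps smoother, is to bypass \eqref{ARE} and instead combine the integral Riccati identity \eqref{RI} with the mild formula \eqref{Vmild} directly: substitute \eqref{Vmild} into the left side of \eqref{RepL1}, expand, and recognize \eqref{RI} plus a Fubini rearrangement of the double integral; this trades the differentiation step for an integration-by-parts/Fubini computation, which is typically the form such representation-formula lemmas take (cf. Flandoli's argument cited in the introduction).
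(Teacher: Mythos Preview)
Your primary approach---define $\varphi(s)=\la\Lom V(s)x,e^{-sA^*}y\ra$, differentiate for $x,y\in D(A^*)$, apply the pointwise Riccati equation \eqref{ARE} with arguments $V(s)x,e^{-sA^*}y\in D(A^*)$, and then pass to $x,y\in H'$ by density---is correct. The cancellation you need between contribution (ii) and the third term of \eqref{ARE} indeed holds, via self-adjointness of $\Lom$ and the Riesz map $\wJ$, so $-\varphi'(s)$ collapses exactly to $\la JB^*V(s)x,B^*e^{-sA^*}y\ra$. The density step is handled by (H3) for the $e^{-sA^*}y$ factor and by Proposition~\ref{extreg} combined with \eqref{Vmild} for the $V(s)x$ factor, exactly as the Remark following the Lemma indicates.

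This is a genuinely different route from the paper's. The paper never differentiates: it starts directly with $x,y\in H'$, takes the \emph{integral} Riccati identity \eqref{RI}, substitutes the mild formula \eqref{Vmild} for $e^{-sA^*}x$ inside it, and obtains the desired identity plus a remainder $R=R_1-R_2-R_3+R_4$. Showing $R=0$ is then the bulk of the work: after using \eqref{RI} once more on $R_1$ and a change of variable plus Fubini to match $R_2,R_3$, the delicate point is proving $R_3'=R_4$, i.e.\ commuting $B^*$ with a time integral. Because $B^*$ is unbounded this is done through the resolvent regularization $I_n=n(nI-A^*)^{-1}$ and dominated convergence. Your approach trades that approximation argument for a density argument on the data $x,y$, using the differential ARE \eqref{ARE} where the paper uses only the integral version \eqref{RI}. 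Your route is shorter and conceptually cleaner; the paper's route has the minor advantage of working at the $H'$ level throughout and making explicit the Fubini/approximation machinery, which it reuses later (e.g.\ in the proof of Proposition~\ref{RepL2}). The ``alternative'' you sketch at the end---substitute \eqref{Vmild} into \eqref{RI} and rearrange by Fubini---is precisely the paper's argument.
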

\begin{rk}
The integral in the above formula is meaningful. Indeed the first part of the bracket defines an element of $L^2_{\text{loc}}(\R;U)$ because of \eqref{Vmild} and the extended regularity result stated in Proposition \ref{extreg}. The second part of the bracket  defines an element of $L^2_{\text{loc}}(\R;U')$ thanks to the direct inequality stated in (H3).
\end{rk}
\begin{proof}[Proof of Theorem \ref{thmWP}]
At first, $U(t)$ is a $C_0$-group on $H$ because it is the conjugate group (by $\Lom$) of $V(t)$ . The relation between the infinitesimal generator of $V(t)$ and those of $U(t)$ is also a general fact about conjugate semigroups (see \cite[p. 43 and p. 59]{EN}).

To prove relation \eqref{repU}, we use relation \eqref{RepL1} in which we replace $x$ by $\ILom x_0$ and $y$ by $e^{tA^*}y$. Finally we use the definition of $U(t)$, that is 
$U(t)x_0=\Lom V(t) \ILom x_0.$
\end{proof}

\begin{proof}[Proof of Lemma \ref{RepL1Lem}]
F. Flandoli has proved in \cite{Flan87} a similar relation for the solution of a differential Riccati equation. We adapt his proof to the case of an algebraic Riccati equation.
The proof contains two steps : at first, we use the integral form of the Riccati equation \eqref{RI} and the variation of constants formula for $V$ \eqref{Vmild} to prove relation \eqref{RepL1}
modulo a rest. Then we show that this rest vanishes. 
\footnote{
In order to simplify the notations, we will omit the name of the spaces under the duality brackets in this proof.
}
\bigskip

{\bf First step.}
Fix $x,y \in H'$ and $t\in \R$. From \eqref{RI} and \eqref{Vmild} we have
\begin{align*}
&\la\Lom x,y\ra\\
=&\la \Lom \Big[e^{-tA^*}x\Big],e^{-tA^*}y\ra 
-\int_0^t \la C\Lom \Big[e^{-sA^*}x\Big],\wJ C\Lom e^{-sA^*}y \ra ds\\
&+\int_0^t \la JB^*\Big[e^{-sA^*}x\Big],B^* e^{-sA^*}y\ra ds\\
=&\la \Lom \Big[V(t)+\int_0^t e^{-(t-r)A^*}C^*\wJ C\Lom V(r)dr\Big]x,e^{-tA^*}y\ra\\
&-\int_0^t \la C\Lom \Big[V(s)+\int_0^se^{-(s-r)A^*}C^*\wJ C\Lom V(r)dr\Big]x,\wJ C\Lom e^{-sA^*}y \ra ds\\
&+\int_0^t \la JB^*\Big[V(s)+\int_0^se^{-(s-r)A^*}C^*\wJ C\Lom V(r)dr\Big]x,B^* e^{-sA^*}y\ra ds\\
=&\la \Lom V(t)x,e^{-tA^*}y\ra +\int_0^t \la JB^*V(s)x,B^* e^{-sA^*}y\ra ds+R.\\
\end{align*}

{\bf Second step.}
To obtain relation \eqref{RepL1}, we have to show that the rest $R$ vanishes. To lighten the writing, we set
\begin{equation*}
g(r):=C^*\wJ C \Lom V(r)x \in C(\R;H').
\end{equation*}
Let us rewrite the rest :
\begin{align*}
R=&\la \Lom \int_0^te^{-(t-r)A^*}g(r)dr,e^{-tA^*}y \ra \\
&-\int_0^t \la C\Lom V(s)x,\wJ C\Lom e^{-sA^*}y \ra ds\\
&-\int_0^t \la C\Lom \int_0^se^{-(s-r)A^*}g(r)dr,\wJ C\Lom e^{-sA^*}y \ra ds\\
&+\int_0^t \la JB^*\int_0^se^{-(s-r)A^*}g(r)dr,B^* e^{-sA^*}y\ra ds.\\
=:& R_1-R_2-R_3+R_4.
\end{align*}

\textbullet $\,$ We can also write $R_1$ as 
\begin{equation*}
R_1=\int_0^t \la \Lom e^{-(t-r)A^*}g(r),e^{-(t-r)A^*}e^{-rA^*}y \ra dr.
\end{equation*}
The integrand of the above integral  corresponds to the first term in the right member of \eqref{RI} by replacing $x$ by $C^*\wJ C\Lom V(r)x=g(r)$, $y$ by $e^{-rA^*}y$ and $t$ by $t-r$. Hence
\begin{align*}
R_1=&\int_0^t \la \Lom g(r),e^{-rA^*}y \ra dr\\
&+\int_0^t\Big[\int_0^{t-r}\la C\Lom e^{-sA^*}g(r),\wJ C\Lom e^{-sA^*}e^{-rA^*}y\ra ds\Big]dr\\
&-\int_0^t\Big[\int_0^{t-r}\la JB^*e^{-sA^*}g(r),B^*e^{-sA^*}e^{-rA^*}y\ra ds\Big]dr\\
=:&R_1'+R_2'-R_3'.
\end{align*}

\textbullet $\,$
We have
\begin{equation*}
R_1'=R_2.
\end{equation*}
The change of variable $\sigma:=s+r$ and Fubini's theorem  give 
\begin{align*}
R_2'=&\int_0^t\int_r^{t}\la C\Lom e^{-(\sigma-r)A^*}g(r),\wJ C\Lom e^{-\sigma A^*}y\ra d\sigma dr\\
=&\int_0^t\int_0^{\sigma}\la C\Lom e^{-(\sigma-r)A^*}g(r),\wJ C\Lom e^{-\sigma A^*}y\ra drd\sigma \\
=&R_3.
\end{align*}

\textbullet $\,$ Il remains to show that $R_3'=R_4$. Difficulties arise since the operator $B^*$ is unbounded. The idea is to construct two approximations 
$R_3'(n)$ and $R_4(n)$ for $R_3'$ and $R_4$. We show that $R_3'(n)=R_4(n)$ and that $R_3'(n)$ and $R_4(n)$ converge respectively to  $R_3'$ and $R_4$.

\begin{rk}
$A^*$ is the infinitesimal generator of a  $C_0$-group in $H'$. Hence for sufficiently large  $n\in \N$,  $n$ lies in the resolvant set of $A^*$. We set
\begin{equation*}
I_n:=n(nI-A^*)^{-1}\in L(H').
\end{equation*}
Then for all $x\in H'$, $I_n x \in D(A^*)$ and $I_n x \to x$ as $n\to \infty$ (see \cite[Lemma 3.2. p. 9]{Pazy}).
Moreover, the sequence $\|I_n\|$ is bounded from above independently of $n$. Indeed, as $A^*$ is the generator of a group, it results from Hille-Yosida theorem  (\cite[Theorem 6.3 p. 23]{Pazy}) that for sufficiently large $n\in \N$,
\begin{equation*}
\|I_n\|=\|n(nI-A^*)^{-1}\|\le \frac{n\alpha}{n-\beta},
\end{equation*}
where  $\alpha$ and $\beta$ are two positive constants.
\end{rk}

\textbullet $\,$ For $n$ sufficiently large, we set
\begin{equation*}
R_3'(n):=\int_0^t\Big[\int_0^{t-r}\la JB^*e^{-sA^*}I_ng(r),B^*e^{-sA^*}e^{-rA^*}y\ra ds\Big]dr.
\end{equation*}
The application between the duality bracket is  measurable on the product space $(0,t)\times(0,t)$.
\footnote{ 
The right side is measurable because it is the composition of two measurable functions. (we recall that $B^*e^{-tA^*}$ is well-defined in $L^2_{\text{loc}}(\R;U')$). In the left side  we can replace $B^*$ by $B^*_k:=E^*(A_k^*+\bar{\lambda} I)$ where $A^*_k\in L(H')$ is the \emph{Yosida approximation} of $A^*$ (see \cite{Pazy}). For all $x \in D(A^*)$, $B^*_k x\to B^*x$ as $k\to \infty$ and $B^*_k \in L(H',U')$. Hence, the left-hand side of the duality bracket is measurable as a simple limit of continuous (hence measurable) functions on $(0,t)\times(0,t)$.
}
Moreover
\begin{align*}
&\int_0^t\int_0^{t-r}\Big|\la JB^*e^{-sA^*}I_ng(r),B^*e^{-sA^*}e^{-rA^*}y\ra\Big| dsdr\\
\le& \int_0^t\int_0^t \Big|\la JB^*e^{-sA^*}I_ng(r),B^*e^{-sA^*}e^{-rA^*}y\ra\Big| dsdr\\
=& \int_0^t\Big[\int_0^t \Big|\la JB^*e^{-sA^*}I_ng(r),B^*e^{-sA^*}e^{-rA^*}y\ra\Big| ds\Big]dr \qquad \text{(Fubini-Tonelli)}\\
\le & c\int_0^t \|g(r)\|_{H'}\|e^{-rA^*}\|_{H'}dr\qquad \text{(Cauchy-Schwarz, direct inequality)}\\
<&\infty.
\end{align*}
Hence we can invert the order of the integrals in $R_3'(n)$. We get (first by doing the change of variable $\sigma:=s+r$) :
\begin{align*}
R_3'(n)=&\int_0^t\int_r^{t}\la JB^*e^{-(\sigma-r)A^*}I_ng(r),B^*e^{-\sigma A^*}y\ra d{\sigma}dr\\
=&\int_0^t\int_0^{{\sigma}}\la JB^*e^{-(\sigma-r)A^*}I_ng(r),B^*e^{-\sigma A^*}y\ra drd\sigma.
\end{align*}
Finally, $R_3'(n)=\int_0^t \varphi_n(r) dr$ et $R_3'=\int_0^t \varphi(r) dr$ with the evident notations. For all $0\le r\le t$, we have 
\begin{align*}
|\varphi_n(r)-\varphi(r)|=&\Big|\int_0^{t-r}\la JB^*e^{-sA^*}[I_ng(r)-g(r)],B^*e^{-sA^*}e^{-rA^*}y\ra ds\Big|\\
\le& \int_0^t\Big|\la JB^*e^{-sA^*}[I_ng(r)-g(r)],B^*e^{-sA^*}e^{-rA^*}y\ra \Big|ds\\
\le& c \|I_ng(r)-g(r)\|_{H'}\|e^{-rA^*}y\|_{H'} \quad \text{(Cauchy-Schwarz and direct inequality)}.
\end{align*}
Hence $\varphi_n(r)\to\varphi(r)$ as $n\to \infty$. Thanks to Cauchy-Schwarz, the direct inequality and because $\|I_n\|$ is bounded from above, we have
\begin{equation*}
|\varphi_n(r)|\le c\|I_n g(r)\|_{H'}\|e^{-rA^*}y\|_{H'}  \le c' \|g(r)\|_{H'}\|e^{-rA^*}y\|_{H'}.
\end{equation*}
We can apply the dominated convergence theorem : $R_3'(n)\to R_3$.

\textbullet $\,$ For sufficiently large $n$, we set
\begin{equation*}
R_4(n):=\int_0^t \la JB^*\int_0^se^{-(s-r)A^*}I_ng(r)dr,B^* e^{-sA^*}y\ra ds.
\end{equation*}
But $I_n$ et $e^{-(s-r)}A^*$ commute and 
\begin{align*}
B^*I_n&=E^*(A+\lambda I)^* n(nI-A^*)^{-1}\\
&=-nE^*+(n^2+n\lambda)E^*(nI-A^*)^{-1} \in L(H').
\end{align*}
Hence (see \cite[p. 139]{Bal1981} for interverting $B^*$ and the integral)
\begin{equation*}
B^*\int_0^se^{-(s-r)A^*}I_ng(r)dr=\int_0^sB^*I_ne^{-(s-r)A^*}g(r)dr 
\end{equation*}
and
\begin{equation*}
R_4(n)=\int_0^t\int_0^s \la JB^*I_ne^{-(s-r)A^*}g(r),B^* e^{-sA^*}y\ra drds=R_3'(n).
\end{equation*}
Finally, for all $0\le r\le t$, $I_n g(r) \to g(r)$ and $\|I_n g(r)\| \le c\|g(r)\|$, the right member being integrable on $(0,t)$. Thanks to the dominated convergence theorem, $I_n g \to g$ in $L^1(0,t;H')$. The estimation of proposition \ref{extreg} gives
\begin{equation*}
B^*\int_0^s e^{-(s-r)A^*} I_n g(r)dr \to B^* \int_0^s e^{-(s-r)A^*}g(r)dr
\end{equation*}
in $L^2(0,t;U')$. Hence $R_4(n)\to R_4$ and by unicity of the limit, $R_3'=R_4$.
\end{proof}

We can do a little better and link the infinitesimal generator of $U(t)$ to the original operators involved in the closed-loop problem \eqref{closedloop}.
\begin{thm}\label{thmWP2}
The operator $A: D(A)\subset H\to H$ can be extended to a bounded operator $\wA:H\to D(A^*)$. The operator 
\begin{equation*}
\wA-BJB^*\ILom
\end{equation*}
coincides with $A_U$ (the generator of $U(t)$) on $D(A_U)=\Lom D(A^*)$ i.e.
\begin{equation*}
\forall x_0 \in D(A_U),\qquad (\wA-BJB^*\ILom)x_0=A_U x_0 \in H.
\end{equation*}
\end{thm}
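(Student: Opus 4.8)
The plan is to first make sense of the extension $\wA$, then check the two operators agree on the relevant domain. For the extension: by hypothesis (H2) we have $B^* = E^*(A+\lambda I)^*$ with $E \in L(U,H)$, and $A$ generates a $C_0$-group on $H$. I would argue that $A + \lambda I : D(A) \to H$, viewed through the duality pairing, extends continuously to a map $H \to D(A^*)'$: indeed for $x \in D(A)$ and $\phi \in D(A^*)$ one has $\la (A+\lambda I)x, \phi\ra_{H,H'} = \la x, (A+\lambda I)^*\phi\ra_{H,H'}$, and the right side is bounded by $\|x\|_H \|\phi\|_{D(A^*)}$; so $x \mapsto (A+\lambda I)x$ extends to $\widetilde{A+\lambda I} \in L(H, D(A^*)')$, and then $\wA := \widetilde{A+\lambda I} - \lambda I$ (with $\lambda I$ regarded as the bounded inclusion $H \hookrightarrow D(A^*)'$ composed with multiplication by $\lambda$). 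This $\wA$ restricts to $A$ on $D(A)$ by construction. Note also that $B = (A+\lambda I \text{ applied weakly}) \circ E$ in the sense that $B = \widetilde{A+\lambda I} \, E$, which is the rigorous counterpart of the identity $B^* = E^*(A+\lambda I)^*$; this will be what ties $B$ to $\wA$.

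Next, I would compute $(\wA - BJB^*\ILom)x_0$ for $x_0 \in D(A_U) = \Lom D(A^*)$, say $x_0 = \Lom z$ with $z \in D(A^*)$. We know from Theorem \ref{thmWP} that $A_U x_0 = \Lom(-A^* - C^*\wJ C\Lom)\ILom x_0 = \Lom(-A^* z - C^*\wJ C \Lom z) \in H$. So the goal is the identity $\wA \Lom z - BJB^*\ILom \Lom z = \Lom(-A^* z - C^*\wJ C \Lom z)$, i.e. $\wA \Lom z - BJB^* z = -\Lom A^* z - \Lom C^*\wJ C \Lom z$, both sides a priori living in $D(A^*)'$. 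This is precisely the \emph{formal} conjugation identity $\ILom(A - BJB^*\ILom)\Lom = -A^* - C^*\wJ C\Lom$ rearranged, and it is exactly the content of the Riccati equation \eqref{ARE}. The honest way to verify it is to pair both sides with an arbitrary $\phi \in D(A^*)$: the left side becomes $\la \wA \Lom z, \phi\ra - \la z, JB^*\phi\ra_{\ldots}$ — wait, more carefully $\la BJB^*z, \phi\ra_{D(A^*)',D(A^*)} = \la JB^*z, B^*\phi\ra_{U,U'}$ — and $\la \wA \Lom z, \phi\ra = \la \Lom z, (A+\lambda I)^*\phi\ra - \lambda\la \Lom z, \phi\ra = \la \Lom z, A^*\phi\ra$. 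Meanwhile the right side pairs to $-\la \Lom z, A^*\phi\ra$ — hmm, sign — $\la -\Lom A^* z, \phi\ra = -\la \Lom A^* z, \phi\ra = -\la A^* z, \Lom \phi\ra$ using self-adjointness of $\Lom$, and $\la -\Lom C^*\wJ C\Lom z, \phi\ra = -\la C\Lom z, \wJ C\Lom\phi\ra$. Matching everything, the required equality is $\la \Lom z, A^*\phi\ra - \la JB^*z, B^*\phi\ra = -\la \Lom A^* z, \phi\ra - \la C\Lom z, \wJ C\Lom\phi\ra$, which after using self-adjointness to write $\la \Lom z, A^*\phi\ra = \la \Lom A^*\phi, z\ra = \la \Lom A^*\phi, z\ra_{H,H'}$ is exactly \eqref{ARE} applied to the pair $z, \phi \in D(A^*)$ (possibly after first restricting to $z, \phi \in D((A^*)^2)$ and using density, as \eqref{ARE} itself was derived). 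One must be slightly careful that $\Lom D(A^*) \subset H$ genuinely, so that $A_U x_0 \in H$ and the claim ``$(\wA - BJB^*\ILom)x_0 \in H$'' is meaningful — but that is immediate since $\Lom \in L(H',H)$ and $-A^*z - C^*\wJ C\Lom z \in H'$.

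The main obstacle, as usual in this unbounded-control setting, is bookkeeping of spaces: making sure each term of the identity is interpreted in the correct space ($D(A^*)'$ vs. $H$ vs. $H'$), that the adjoint relations $B = \widetilde{A+\lambda I}E$ and $\la B v, \phi\ra_{D(A^*)',D(A^*)} = \la v, B^*\phi\ra_{U,U'}$ are used in the legitimate direction, and that the pairing argument is run against test elements $\phi$ in a space ($D(A^*)$, or $D((A^*)^2)$ then density) where \eqref{ARE} is already available. There is no genuine analytic difficulty once $\wA$ is defined: everything reduces to \eqref{ARE}, which has already been proved; the theorem is really a re-interpretation of the Riccati equation as an operator identity. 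I would therefore present the proof as: (i) construct $\wA$ via the duality extension and record $B = \widetilde{A+\lambda I}E$; (ii) fix $x_0 = \Lom z$, $z \in D(A^*)$, compute $A_U x_0$ from Theorem \ref{thmWP}; (iii) pair $(\wA - BJB^*\ILom)x_0 - A_U x_0$ with arbitrary $\phi \in D((A^*)^2)$, reduce to \eqref{ARE}, conclude by density of $D((A^*)^2)$ in $D(A^*)$ that the two elements of $D(A^*)'$ coincide, hence the element lies in $H$ and equals $A_U x_0$.
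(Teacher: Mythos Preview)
Your argument is correct, and it takes a genuinely different route from the paper's own proof. The paper explicitly writes ``Instead of returning to the Riccati equation \eqref{ARE}, we are going to differentiate the variation of constants formula \eqref{repU}'' --- and then spends three technical steps establishing enough regularity (continuity of $r\mapsto B^*\ILom U(r)x_0$, differentiability of $s\mapsto B^*e^{sA^*}y$ for $y\in D((A^*)^2)$) to justify differentiating \eqref{repU} in $t$ and evaluating at $t=0$. You do exactly what the paper chose not to do: fix $x_0=\Lom z$, write out $A_Ux_0=\Lom(-A^*z-C^*\wJ C\Lom z)$ from Theorem~\ref{thmWP}, pair $(\wA-BJB^*\ILom)x_0-A_Ux_0$ against $\phi\in D(A^*)$, and observe that the resulting identity is literally \eqref{ARE} with $(x,y)=(z,\phi)$. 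This is shorter and purely algebraic; it bypasses the regularity lemmas entirely. The paper's approach, on the other hand, derives the generator from the dynamics \eqref{repU} rather than from the static Riccati identity, which is conceptually closer to the semigroup viewpoint and makes the role of the variation-of-constants formula more visible --- but at the cost of the extra regularity work. Two minor cosmetic points: (i) your detour through $\widetilde{A+\lambda I}$ is unnecessary --- the paper (Lemma~\ref{extension}) just takes $\wA$ to be the Banach adjoint of $A^*:D(A^*)\to H'$, which gives \eqref{relextension} directly; (ii) since \eqref{ARE} is already stated for $x,y\in D(A^*)$, you may pair against $\phi\in D(A^*)$ from the start and skip the extra density step through $D((A^*)^2)$.
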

We first recall a classical extension result for the unbounded operator $A$ to a bounded operator on $H$ with values on the extrapolation space $D(A^*)'$ (see \cite[pp. 6-7]{LT1} and \cite[pp. 21-22]{CH1998}).
\begin{lem}\label{extension}
The operator $A:D(A)\subset H\to H$ admits a unique extension to an operator $\wA\in L(H,D(A^*)')$. Moreover this extension satisfies the relation
\begin{equation}\label{relextension}
\la \wA x,y\ra_{D(A^*)',D(A^*)}=\la x,A^*y\ra_{H,H'}.
\end{equation}
for all $x\in H$ and $y\in D(A^*)$.
\end{lem}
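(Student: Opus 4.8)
The plan is to construct $\wA$ directly by transposition, reading relation \eqref{relextension} as its \emph{definition} rather than as a property to be verified after the fact. Concretely, for a fixed $x\in H$ I would consider the map $\ell_x\colon D(A^*)\to\C$ given by $\ell_x(y):=\la x,A^*y\ra_{H,H'}$, and show that it is a continuous linear functional on the Hilbert space $D(A^*)$, so that it may legitimately be regarded as an element of the dual space $D(A^*)'$. Setting $\wA x:=\ell_x$ then yields, by construction, a map $\wA\colon H\to D(A^*)'$ for which \eqref{relextension} holds identically; it remains only to check that this map is bounded, that it extends $A$, and that it is the unique such extension.

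The first key step is the continuity estimate. For $x\in H$ and $y\in D(A^*)$, the Cauchy--Schwarz inequality for the pairing $\la\cdot,\cdot\ra_{H,H'}$ gives
$$
|\ell_x(y)|=\big|\la x,A^*y\ra_{H,H'}\big|\le \|x\|_H\,\|A^*y\|_{H'}\le \|x\|_H\,\|y\|_{D(A^*)},
$$
where the last inequality is immediate from the definition of the graph norm $\|y\|^2_{D(A^*)}=\|y\|^2_{H'}+\|A^*y\|^2_{H'}$. This shows at once that $\ell_x\in D(A^*)'$ (so that $\wA x$ is well defined) and that $\|\wA x\|_{D(A^*)'}\le\|x\|_H$. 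Since $x\mapsto\ell_x$ is visibly linear, we obtain simultaneously the linearity and the bound $\|\wA\|_{L(H,D(A^*)')}\le 1$, hence $\wA\in L(H,D(A^*)')$. No representation theorem is needed here, because $\wA x$ is taken to \emph{be} the functional $\ell_x$, which already lives in $D(A^*)'$ by the estimate above.

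There then remain the extension and uniqueness properties. For the extension property I would take $x\in D(A)$ and recall that $Ax\in H$ is identified, via the embedding $H\subset D(A^*)'$ coming from $D(A^*)\subset H'$, with the functional $y\mapsto\la Ax,y\ra_{H,H'}$ on $D(A^*)$. The defining relation of the adjoint, $\la Ax,y\ra_{H,H'}=\la x,A^*y\ra_{H,H'}$ valid for $x\in D(A)$ and $y\in D(A^*)$, says precisely that this functional coincides with $\ell_x=\wA x$; hence $\wA x=Ax$ in $D(A^*)'$, so $\wA$ genuinely extends $A$. Uniqueness follows because any two operators in $L(H,D(A^*)')$ extending $A$ must agree on $D(A)$, which is dense in $H$ (since $A$ generates a group, by (H1)), and being continuous into $D(A^*)'$ they must therefore coincide on all of $H$.

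The argument presents no serious obstacle; the only point requiring care is the bookkeeping of the three duality structures in play: the pairing $\la\cdot,\cdot\ra_{H,H'}$, the pairing $\la\cdot,\cdot\ra_{D(A^*)',D(A^*)}$, and the identification $H\hookrightarrow D(A^*)'$ dual to the continuous inclusion $D(A^*)\subset H'$. One must ensure these are combined consistently with the (bilinear) convention for the duality brackets, so that $x\mapsto\ell_x$ is linear rather than antilinear and so that the adjoint relation translates cleanly into the embedding $H\subset D(A^*)'$. This is a standard extrapolation construction, as the cited references \cite{LT1} and \cite{CH1998} indicate.
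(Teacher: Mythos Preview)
Your proof is correct and is essentially the same as the paper's: the paper defines $\wA$ as the Banach-adjoint of $A^*\in L(D(A^*),H')$, which is precisely your map $x\mapsto\ell_x$ written in abstract language. Both arguments then verify the extension property via the defining adjoint relation $\la Ax,y\ra_{H,H'}=\la x,A^*y\ra_{H,H'}$ and deduce uniqueness from the density of $D(A)$ in $H$.
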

\begin{proof}[Proof of Lemma \ref{extension}] The unicity of such an extension is the consequence of the density of $D(A)$ in $H$.  Provided with the norm $\|\cdot\|_{D(A^*)}$, $D(A^*)$ is a Hilbert space and $A^*\in L(D(A^*),H')$. We denote by $\wA$ the (Banach-)adjoint of $A^*$ seen as a bounded operator between the Banach spaces $D(A^*)$ and $H$. Hence the definition of the (Banach-)adjoint gives
\begin{equation*}
\widetilde{A}\in L(H,D(A^*)')
\end{equation*}
and for all $x\in H$ and $y\in D(A^*)$,
\begin{equation*}
\la \widetilde{A}x,y\ra_{D(A^*)',D(A^*)}=\la x,A^*y\ra_{H,H'}
\end{equation*}
i.e. relation \eqref{relextension} is true. Moreover this new operator $\wA$ defines extension of $A$ i.e. the two operators coincides on $D(A)$.  Indeed from the above relation specialized to $x \in D(A)\subset H$, we get
\begin{equation*}
\forall y \in D(A^*), \quad \la \widetilde{A}x,y\ra_{D(A^*)',D(A^*)}=\la Ax,y\ra_{H,H'} \quad \Rightarrow \quad Ax=\wA x \in H. \qedhere
\end{equation*}
\end{proof}
\begin{proof}[Proof of Theorem \ref{thmWP2}]
Instead of returning to the Riccati equation \eqref{ARE}, we are going to differentiate the variation of constants formula \eqref{repU}.
We know that for $x_0 \in \Lom D(A^*)=D(A_U)$, the map 
\begin{equation*}
t\mapsto U(t)x_0
\end{equation*}
is differentiable and 
\begin{equation*}
\frac{d}{dt}U(t)x_0=A_U U(t)x_0.
\end{equation*}
In particular if $y\in H'$, then 
\footnote{
Again, when the name of spaces under the duality brackets are unnecessary, we omit them.
}
\begin{equation*}
\la\frac{d}{dt}U(t)x_0,y\ra=\la A_U U(t)x_0,y\ra.
\end{equation*}
Differentiating \eqref{repU} with respect to $t$, we want to link the generator $A_U$ and the operator $A-BJB^*\Lom^{-1}$ (a priori with values in $D(A^*)'$). We remark that defining the domain of the latter operator is not clear.
Let $x_0\in \Lom D(A^*)$ and $y\in D((A^*)^2)$. 

{\bf First step.} The map
\begin{equation*}
r\mapsto B^*\Lom^{-1}U(r)x_0
\end{equation*}
is continuous from $\R$ to $U'$. Indeed, setting $y_0:=\Lom^{-1}x_0\in D(A^*)$, we have
\begin{align*}
B^*\Lom^{-1}U(r)x_0&=B^*\Lom^{-1}\Lom V(r)\Lom^{-1}x_0\\
&=B^*V(r)y_0\\
&=E^*(A^*+\bar{\lambda}I)V(r)y_0\\
&=E^*(A^*+C^*\wJ C\Lom-C^*\wJ C\Lom+\bar{\lambda}I)V(r)y_0\\
&=-E^*(-A^*-C^*\wJ C\Lom)V(r)y_0+E^*(-C^*\wJ C\Lom+\bar{\lambda}I)V(r)y_0\\
&=-E^*V(r)(-A^*-C^*\wJ C\Lom)y_0+E^*(-C^*\wJ C\Lom+\bar{\lambda}I)V(r)y_0,\\
\end{align*}
the latter expression being continuous in $r$.
\begin{rk}
On $D(A^*)=D(-A^*-C^*\wJ C\Lom)$, the operators $V(r)$ et $-A^*-C^*\wJ C\Lom$ (generator of $V(r)$) commute (this is a general fact about semigroups) but a priori $V(r)$ and $A^*$ do not commute.
\end{rk}

{\bf Second step.}
The map
\begin{equation*}
s\mapsto B^*e^{sA^*}y
\end{equation*}
is differentiable on $\R$ with values in $U'$. 
Indeed, as $y\in D((A^*)^2)$, we have $(A^*+\bar{\lambda}I)y \in D(A^*)$ and
\begin{equation*}
B^*e^{sA^*}y=E^*e^{sA^*}(A^*+\bar{\lambda}I)y.
\end{equation*}
The latter expression is differentiable with respect to $s$ and its derivative is $B^*e^{sA^*}A^*y$.

{\bf Third step.} We deduce from the two previous steps that the map
\begin{equation*}
t\mapsto\int_0^t\la JB^*\Lom^{-1}U(r)x_0,B^*e^{(t-r)A^*}y\ra dr
\end{equation*}
is differentiable on $\R$ and its derivative is the map
\begin{equation*}
t\mapsto\int_0^t\la JB^*\Lom^{-1}U(r)x_0,B^*e^{(t-r)A^*}A^*y\ra dr +\la JB^*\Lom^{-1}U(t)x_0,B^*y\ra.
\end{equation*}
It results that given two (regular) data $x_0\in \Lom D(A^*)$ and $y\in D((A^*)^2)$, we can differentiate $\la U(t)x_0,y\ra$ with respect to $t$ and get
\begin{align*}
\frac{d}{dt}\la U(t)x_0,y\ra=&\la e^{At}x_0,A^*y\ra
-\int_0^t\la JB^*\Lom^{-1}U(r)x_0,B^*e^{(t-r)A^*}A^*y\ra dr\\
&-\la JB^*\Lom^{-1}U(t)x_0,B^*y\ra.
\end{align*}
Replacing $y$ by $A^*y$ in \eqref{repU} and reinjecting in the above relation, we obtain
\begin{equation}\label{diffrepU}
\frac{d}{dt}\la U(t)x_0,y\ra=\la U(t)x_0,A^*y\ra-\la JB^*\Lom^{-1}U(t)x_0,B^*y\ra.
\end{equation}
With the same regularity as above for $x_0$ et $y$, we have 
\begin{equation*}
\frac{d}{dt}\la U(t)x_0,y\ra_{H,H'}=\la A_UU(t)x_0,y\ra_{H,H'}=\la A_UU(t)x_0,y\ra_{D(A^*)',D(A^*)},
\end{equation*}
where  $A_U$ is the infinitesimal generator of $U(t)$. We recall from Lemma \ref{extension} that $A$ admits a unique extension to an aperator $\widetilde{A}\in L(H,D(A^*)')$. Thanks to this extension we can link $A_U$ and 
$A-BJB^*\Lom^{-1}$.  From  \eqref{diffrepU} and \eqref{relextension} we have, for 
$x_0\in \Lom D(A^*)$ and $y\in D((A^*)^2)$,
\begin{align*}
\frac{d}{dt}\la U(t)x_0,y\ra_{H,H'}&=\la U(t)x_0,A^*y\ra_{H,H'}-\la JB^*\Lom^{-1}U(t)x_0,B^*y\ra_{U,U'}\\
&=\la\widetilde{A}U(t)x_0,y\ra_{D(A^*)',D(A^*)}-\la BJB^*\Lom^{-1}U(t)x_0,y\ra_{D(A^*)',D(A^*)}\\
&=\la(\widetilde{A}-BJB^*\Lom^{-1})U(t)x_0,y\ra_{D(A^*)',D(A^*)}\\
&=\la A_UU(t)x_0,y\ra_{D(A^*)',D(A^*)}
\end{align*}
In particular, the latter equality is true for $t=0$.
Hence, given a fixed $x_0\in \Lom D(A^*)$, we have 
\begin{equation*}
\la(\widetilde{A}-BJB^*\Lom^{-1})x_0,y\ra_{D(A^*)',D(A^*)}=\la A_Ux_0,y\ra_{D(A^*)',D(A^*)},
\end{equation*}
for all $y\in D((A^*)^2)$. This relation remains true for all $y\in D(A^*)$ by density of $D((A^*)^2)$ in $D(A^*)$ (for the norm $\|\cdot\|_{D(A^*)}$).
Finally,
\begin{equation*}
\forall x_0\in \Lom D(A^*)=D(A_U),\qquad (\widetilde{A}-BJB^*\Lom^{-1})x_0=A_Ux_0 \in H. \qedhere
\end{equation*}
\end{proof}
\begin{rk}
With an unbounded control operator (i.e. $B\in L(U,D(A^*)')$), one can prove, through examples, that the domain of $A_U$ is not always included in the domain of $A$. Thus, in general, the extension $\wA$ is necessary in order to link $A_U$ and $A$ on $D(A_U)$ (i.e. we cannot omit the ``tilde'' in the above relation). This phenomenon does not appear with a bounded control operator (i.e. $B \in L(U,H)$): in that case, we can prove that the spaces $D(A)$ and $D(A_U)$ coincide.
\end{rk}
\section{Derivation of a representation formula for $\ILom$ \\and exponential decay}\label{Decay}
In this section, we give a justification to a representation formula for $\ILom$ involving the group $U(t)$. This corresponds to the formula (3.11) in \cite{K97}. We recall it as it is written in \cite{K97} : for all $s,t \in \R$,
\begin{gather*}
\ILom=U(t-s)^*\ILom U(t-s)\\
+\int_s^tU(\tau-s)^*(C^*\wJ C+\ILom BJB^*\ILom)U(\tau-s)d\tau.
\end{gather*}
This formula is used in \cite{K97} to prove the exponential decay of the solutions of the closed-loop system. Again, F. Flandoli derived an analog formula in the case of differential Riccati equations in \cite{Flan87}. We adapt his proof to the case of algebraic Riccati equations.

We first prove a similar representation formula for $\Lom$.
\begin{prop}\label{RepL2}  
For all $x,y \in H'$ and $t \in \R$
\begin{gather}
\la\Lom x,y\ra_{H,H'}=\la\Lom V(t)x,V(t)y\ra_{H,H'}\\ \nonumber
+\int_0^t \la JB^*V(s)x,B^*V(s)y\ra_{U,U'} ds
+\int_0^t\la C\Lom V(s)x,\widetilde{J}C\Lom V(s)y\ra_{H,H'} ds.
\end{gather}
\end{prop}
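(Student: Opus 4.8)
The plan is to mimic the proof of Lemma~\ref{RepL1Lem} but with the second semigroup factor also replaced by $V(t)$ rather than $e^{-tA^*}$. Concretely, I would start from the representation formula \eqref{RepL1} already established in Lemma~\ref{RepL1Lem},
\begin{equation*}
\la\Lom x,y\ra=\la\Lom V(t)x,e^{-tA^*}y\ra+\int_0^t\la JB^*V(s)x,B^*e^{-sA^*}y\ra ds,
\end{equation*}
and then insert the variation-of-constants formula \eqref{Vmild} for $V$ applied to $y$, namely $e^{-tA^*}y=V(t)y+\int_0^t e^{-(t-r)A^*}C^*\wJ C\Lom V(r)y\,dr$, into both occurrences of $e^{-\cdot A^*}y$ on the right-hand side. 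This produces the desired three leading terms $\la\Lom V(t)x,V(t)y\ra$, $\int_0^t\la JB^*V(s)x,B^*V(s)y\ra ds$ and $\int_0^t\la C\Lom V(s)x,\wJ C\Lom V(s)y\ra ds$, plus a remainder $R$ which I must show vanishes.

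For the remainder I would repeat the bookkeeping of the second step of the proof of Lemma~\ref{RepL1Lem}. Setting $h(r):=C^*\wJ C\Lom V(r)y\in C(\R;H')$, the remainder consists of: a term coming from applying $\Lom$ (via \eqref{RepL1} again, now with $h(r)$ in the $y$-slot and time $t-r$), a term from the $C\Lom V(s)x$ against $\int_0^s e^{-(s-r)A^*}h(r)dr$ integral, and a term from $B^*V(s)x$ against $B^*\int_0^s e^{-(s-r)A^*}h(r)dr$. Applying \eqref{RepL1} to the first of these splits it into three pieces; one piece cancels the $C\Lom V(s)x$-term, another piece matches a Fubini-rearranged version of a cross term (change of variable $\sigma=s+r$, then swap the order of integration), and the last piece must be identified with the $B^*$-term. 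As in Lemma~\ref{RepL1Lem}, the only genuinely delicate identification is this last one, $R_3'=R_4$ in the notation there, because it involves commuting $B^*$ with an integral against the (unbounded) semigroup, which is not directly justified.

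I expect the main obstacle to be, exactly as in Lemma~\ref{RepL1Lem}, the justification of interchanging $B^*$ with the convolution integral, and this I would handle the same way: regularize $h(r)$ by the resolvent operators $I_n:=n(nI-A^*)^{-1}\in L(H')$, use that $B^*I_n\in L(H',U')$ so that $B^*$ genuinely commutes with the integral for the regularized data, verify equality of the two regularized remainders, and then pass to the limit $n\to\infty$ using $I_nh(r)\to h(r)$ pointwise, the uniform bound on $\|I_n\|$, the direct inequality (H3), Cauchy--Schwarz and dominated convergence on one side, and Proposition~\ref{extreg} together with $I_nh\to h$ in $L^1(0,t;H')$ on the other side. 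Since $h\in C(\R;H')$ just like the function $g$ in Lemma~\ref{RepL1Lem}, all the measurability and domination estimates go through verbatim. Finally, by uniqueness of limits the remainder $R$ vanishes, and the formula extends from $x,y\in D(A^*)$ to $x,y\in H'$ by density exactly as for \eqref{RepL1}. One could alternatively shortcut this by noting that the present proposition follows from Lemma~\ref{RepL1Lem} purely at the level of the already-proven identity \eqref{RepL1}, substituting \eqref{Vmild} for $y$ and then re-applying \eqref{RepL1} to reorganize the remainder without re-opening the $B^*$-commutation issue; but in either route the hard point is the same unbounded-operator interchange.
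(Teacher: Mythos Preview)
Your approach is essentially the paper's: start from \eqref{RepL1}, substitute \eqref{Vmild} for $y$, and re-apply \eqref{RepL1} to reorganize the cross terms. Two remarks are in order.

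First, the paper actually takes what you call the ``shortcut''. After the substitution one gets only \emph{two} of the three desired terms directly, namely $\la\Lom V(t)x,V(t)y\ra$ and $\int_0^t\la JB^*V(s)x,B^*V(s)y\ra\,ds$, together with two extra pieces $T_2=\la\Lom V(t)x,\int_0^t e^{-(t-r)A^*}h(r)\,dr\ra$ and $T_4=\int_0^t\la JB^*V(s)x,B^*\!\int_0^s e^{-(s-r)A^*}h(r)\,dr\ra\,ds$. The third desired term $\int_0^t\la C\Lom V(s)x,\wJ C\Lom V(s)y\ra\,ds$ does \emph{not} appear at this stage; it is produced by applying \eqref{RepL1} (with $V(s)x$, $h(s)$, time $t-s$) inside $T_2$. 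This re-application yields only two pieces, not three: the desired $C$-term and a double integral which, after the change of variable $\sigma=r+s$ and Fubini, must cancel $T_4$. So your description of the remainder (three pieces, one of which ``cancels the $C\Lom V(s)x$-term'') is not quite right, though the underlying mechanism is.

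Second, the paper does not re-open the regularization argument: it simply observes that the two remaining terms are precisely the pair $R_3'$ and $R_4$ from the proof of Lemma~\ref{RepL1Lem} (with $JB^*V(s)x\in L^2_{\mathrm{loc}}$ playing the role that $B^*e^{-sA^*}y$ played there), and invokes the cancellation already established. Your proposed re-derivation via $I_n$ would of course work, but is unnecessary. Likewise, no density extension from $D(A^*)$ to $H'$ is needed at the end, since \eqref{RepL1} and \eqref{Vmild} are already valid for all $x,y\in H'$.
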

\begin{proof}[Proof]
It relies on the representation formula \eqref{RepL1} for $\Lom$ that we have already proved : for $x,y \in H'$,
\begin{equation*}
\la\Lom x,y\ra=\la\Lom V(t)x,[e^{-tA^*}y]\ra+\int_0^t\la JB^*V(s)x,B^*[e^{-sA^*}y]\ra ds.
\end{equation*}
In the right member of the above relation, we replace $e^{-tA^*}y$  and $e^{-sA^*}y$ by using the variation of constants formula \eqref{Vmild} for $V$ :
\begin{align*}
\la\Lom x,y\ra=&\la\Lom V(t)x,V(t)y\ra\\
&+\la\Lom V(t)x,\int_0^t e^{-(t-s)A^*}C^*\wJ C\Lom V(s)yds\ra\\
&+\int_0^t\la JB^*V(s)x,B^*V(s)y\ra ds\\
&+\int_0^t\la JB^*V(s)x,B^*\int_0^s e^{-(s-r)A^*}C^*\wJ C\Lom V(r)ydr\ra ds\\
=:&T_1+T_2+T_3+T_4.
\end{align*}
But
\begin{equation*}
T_2:=\int_0^t \la\Lom V(t-s)V(s)x,e^{-(t-s)A^*}C^*\wJ C\Lom V(s)y\ra ds.
\end{equation*}
Thanks to \eqref{RepL1}, applied to  $V(s)x$ instead of $x$, $C^*C\Lom V(s)y$ instead of y and $t-s$ instead of $t$, we have 
\begin{align*}
T_2=&\int_0^t \la\Lom V(s)x,C^*\wJ C\Lom V(s)y\ra ds\\
&-\int_0^t\int_0^{t-s}\la JB^*V(r)V(s)x,B^*e^{-rA^*}C^*\wJ C\Lom V(s)y\ra drds\\
=&\int_0^t \la C\Lom V(s)x,\wJ C\Lom V(s)y\ra ds\\
&-\int_0^t\int_0^{t-s}\la JB^*V(r+s)x,B^*e^{-rA^*}C^*\wJ C\Lom V(s)y\ra drds.
\end{align*}
The change of variable $\sigma:=r+s$ in the last term gives 
\begin{align*}
T_2=&\int_0^t \la C\Lom V(s)x,\wJ C\Lom V(s)y\ra ds\\
&-\int_0^t\int_s^t\la JB^*V(\sigma)x,B^*e^{-(\sigma-s)A^*}C^*\wJ C\Lom V(s)y\ra d\sigma ds\\
=&\int_0^t \la C\Lom V(s)x,\wJ C\Lom V(s)y\ra ds\\
&-\int_0^t\int_0^\sigma\la JB^*V(\sigma)x,B^*e^{-(\sigma-s)A^*}C^*\wJ C\Lom V(s)y\ra dsd\sigma.
\end{align*}
Hence we have  proved that
\begin{align*}
\la\Lom x,y\ra=&\la\Lom V(t)x,V(t)y\ra\\
&+\int_0^t\la JB^*V(s)x,B^*V(s)y\ra ds\\
&+\int_0^t \la C\Lom V(s)x,\wJ C\Lom V(s)y\ra ds\\
&+\int_0^t\la JB^*V(s)x,B^*\int_0^s e^{-(s-r)A^*}C^*\wJ C\Lom V(r)ydr\ra ds\\
&-\int_0^t\int_0^s\la JB^*V(s)x,B^*e^{-(s-r)A^*}C^*\wJ C\Lom V(r)y\ra drds. 
\end{align*}
We have already shown in the proof of Lemma \ref{RepL1Lem} that the two last terms in the above relation cancel each other. Hence the relation is proved. 
\end{proof}
\begin{prop}\label{RepIL}
For all $x, y \in H$ and $t \in \R$
\begin{gather}
\la\Lom^{-1} x,y\ra_{H',H}=\la\Lom^{-1} U(t)x,U(t)y\ra_{H',H}\\ \nonumber
+\int_0^t \la \wJ CU(s)x,CU(s)y\ra_{H',H} ds
+\int_0^t\la JB^*\Lom^{-1} U(s)x,B^*\Lom^{-1} U(s)y\ra_{U,U'} ds.
\end{gather}
\end{prop}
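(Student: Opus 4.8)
The plan is to obtain Proposition \ref{RepIL} from the already-proven representation formula for $\Lom$ in Proposition \ref{RepL2} by conjugating with $\Lom^{-1}$, exactly as Theorem \ref{thmWP} was obtained from Lemma \ref{RepL1Lem}. First I would recall that $U(t) = \Lom V(t)\ILom$, equivalently $V(t) = \ILom U(t)\Lom$, and that $U(t)$ is a $C_0$-group on $H$ (Theorem \ref{thmWP}), so given $x,y\in H$ the elements $\xi := \ILom x \in H'$ and $\eta := \ILom y \in H'$ are legitimate inputs for Proposition \ref{RepL2}. Applying that proposition to $\xi,\eta$ gives
\begin{equation*}
\la\Lom\xi,\eta\ra_{H,H'}=\la\Lom V(t)\xi,V(t)\eta\ra_{H,H'}+\int_0^t \la JB^*V(s)\xi,B^*V(s)\eta\ra_{U,U'} ds+\int_0^t\la C\Lom V(s)\xi,\wJ C\Lom V(s)\eta\ra_{H,H'} ds.
\end{equation*}
The left-hand side is $\la\Lom\ILom x,\ILom y\ra_{H,H'} = \la x,\ILom y\ra_{H,H'} = \la\ILom y,x\ra_{H',H}$, which by self-adjointness of $\ILom$ equals $\la\ILom x,y\ra_{H',H}$.

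Next I would rewrite each term on the right in terms of $U$. Since $\Lom V(s)\xi = \Lom V(s)\ILom x = U(s)x$, the first term becomes $\la\Lom V(t)\xi,V(t)\eta\ra = \la U(t)x, \ILom U(t)y\ra_{H,H'} = \la \ILom U(t)y, U(t)x\ra_{H',H} = \la\ILom U(t)x, U(t)y\ra_{H',H}$, again using self-adjointness of $\ILom$. For the middle term, $B^*V(s)\xi = B^*\ILom U(s)x = B^*\Lom^{-1}U(s)x$ (and likewise for $\eta$), which is exactly the quantity appearing in Proposition \ref{RepIL}; note this is meaningful as an element of $L^2_{\mathrm{loc}}(\R;U')$ by the computation carried out in the first step of the proof of Theorem \ref{thmWP2} (writing $B^*\Lom^{-1}U(s)x = E^*(A^*+\bar\lambda I)V(s)\ILom x$ and using that $V(s)$ preserves $D(A^*)$ on the dense set $\Lom D(A^*)$, then extending by density and the direct inequality). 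For the last term, $C\Lom V(s)\xi = CU(s)x$, so $\la C\Lom V(s)\xi,\wJ C\Lom V(s)\eta\ra_{H,H'} = \la CU(s)x, \wJ CU(s)y\ra_{H,H'} = \la\wJ CU(s)x, CU(s)y\ra_{H',H}$ by self-adjointness of $C$ together with the identity $\la Cu,\wJ Cv\ra_{H,H'} = (Cu,Cv)_H = (Cv,Cu)_H = \la\wJ Cv, Cu\ra_{H',H}$. Substituting these three identities yields precisely the claimed formula.

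I expect the only genuine subtlety — and hence the step I would write most carefully — to be the justification that the integrands are well-defined and integrable after the substitution, in particular that $s\mapsto B^*\Lom^{-1}U(s)x$ makes sense in $L^2_{\mathrm{loc}}(\R;U')$; but this is already handled by the manipulations in the proof of Theorem \ref{thmWP2} (first step) and by Proposition \ref{extreg} applied to $V$ via its mild formula \eqref{Vmild}, exactly as in the remark following Lemma \ref{RepL1Lem}. Everything else is the bookkeeping of transporting a bilinear identity through the conjugating isomorphism $\Lom$ and its self-adjoint inverse, so no new analytic input beyond Proposition \ref{RepL2} is required.
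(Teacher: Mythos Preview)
Your proposal is correct and follows essentially the same route as the paper: substitute $\ILom x$ and $\ILom y$ for $x$ and $y$ in Proposition \ref{RepL2}, then rewrite each term using $U(s)=\Lom V(s)\ILom$ and the self-adjointness of $\ILom$. The paper's proof is terser (it does not spell out the self-adjointness step or the well-definedness of $B^*\ILom U(s)x$), but the argument is the same.
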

\begin{proof}[Proof]
We replace $x$ by $\Lom^{-1}x$ and $y$ by $\Lom^{-1}y$ in the relation given by the  \linebreak Proposition \ref{RepL2} :
\begin{align*}
\la x,\Lom^{-1}y\ra=&\la\Lom V(t)\Lom^{-1}x,V(t)\Lom^{-1}y\ra+\int_0^t \la JB^*V(s)\Lom^{-1}x,B^*V(s)\Lom^{-1}y\ra ds\\
&+\int_0^t\la\wJ  C\Lom V(s)\Lom^{-1}x, C\Lom V(s)\Lom^{-1}y\ra ds.
\end{align*}
Then, by definition of $U$,
\begin{gather*}
\la \Lom^{-1}x,y\ra=\la\Lom^{-1}U(t)x,U(t)y\ra\\
+\int_0^t \la JB^*\Lom^{-1}U(s)x,B^*\Lom^{-1}U(s)y\ra ds
+\int_0^t\la \wJ CU(s)x,CU(s)y\ra ds. \qedhere
\end{gather*}
\end{proof}
\begin{rk}
A simple change of variable implies that for all $s, t \in \R$ and all $x, y \in H$,
\begin{gather} \label{reldecay}
\la \Lom^{-1}x,y\ra=\la\Lom^{-1}U(t-s)x,U(t-s)y\ra\\ \nonumber
+\int_s^t \la JB^*\Lom^{-1}U(\tau-s)x,B^*\Lom^{-1}U(\tau-s)y\ra d\tau
+\int_s^t\la \wJ CU(\tau-s)x,CU(\tau-s)y\ra d\tau. 
\end{gather}
\end{rk}

Finally, let us recall the outline of the proof of the exponential decay of the solutions of the closed-loop problem \eqref{closedloop}. We denote by $x(t)$ the mild solution of \eqref{closedloop} i.e.
\begin{equation*}
x(t)=U(t)x_0.
\end{equation*}
Using the relation \eqref{reldecay} with $x=y=U(s)x_0=x(s)$, we have
\begin{gather*} 
\la\Lom^{-1} x(s),x(s)\ra=\la\Lom^{-1} x(t),x(t)\ra\\ 
+\int_s^t \la JB^*\Lom^{-1}x(\tau),B^*\Lom^{-1}x(\tau)\ra d\tau
+\int_s^t\la \wJ Cx(\tau),Cx(\tau)\ra d\tau. 
\end{gather*}
Let $0\le s\le t$.
The estimation \eqref{CLom} between $C$ and $\ILom$ and the positiveness of the second term of the right member in the above relation yield
\begin{equation*}
\la\Lom^{-1} x(s),x(s)\ra\ge\la\Lom^{-1} x(t),x(t)\ra
+2\omega \int_s^t\la \ILom x(\tau),x(\tau)\ra d\tau.
\end{equation*}
A Gronwall-type lemma (see \cite[p. 1599]{K97}) gives 
\begin{equation*}
\|x(t)\|_{\omega}^2\le \|x_0\|_{\omega}^2 e^{-2\omega t}\qquad \forall t\ge 0.
\end{equation*}



\begin{thebibliography}{10}

\bibitem{AK1999}
{\sc F.~Alabau and V.~Komornik}, {\em Boundary observability, controllability,
  and stabilization of linear elastodynamic systems}, SIAM J. Control Optim.,
  37 (1999), pp.~521--542 (electronic).

\bibitem{Bal1981}
{\sc A.~V. Balakrishnan}, {\em Applied Functional Analysis}, vol.~3 of
  Applications of Mathematics, Springer-Verlag, New York, second~ed., 1981.

\bibitem{BDP}
{\sc A.~Bensoussan, G.~Da~Prato, M.~C. Delfour, and S.~K. Mitter}, {\em
  Representation And Control of Infinite Dimensional Systems}, Systems \&
  Control: Foundations \& Applications, Birkh{\"a}user Boston Inc., Boston, MA,
  second~ed., 2007.

\bibitem{BJCR2004}
{\sc F.~Bourquin, M.~Joly, M.~Collet, and L.~Ratier}, {\em An efficient
  feedback control algorithm for beams: experimental investigations}, Journal
  of Sound and Vibration, 278 (2004), pp.~181--206.

\bibitem{Briffaut1999}
{\sc J.-S. Briffaut}, {\em M\'ethodes num\'eriques pour le contr\^ole et la
  stabilisation rapide des grandes strucutures flexibles}, PhD thesis, \'Ecole
  Nationale des Ponts et Chauss\'ees, 1999.

\bibitem{CH1998}
{\sc T.~Cazenave and A.~Haraux}, {\em An introduction to semilinear evolution
  equations}, vol.~13 of Oxford Lecture Series in Mathematics and its
  Applications, The Clarendon Press Oxford University Press, New York, 1998.

\bibitem{Coron07}
{\sc J.-M. Coron}, {\em Control and Nonlinearity}, vol.~136 of Mathematical
  Surveys and Monographs, American Mathematical Society, Providence, RI, 2007.

\bibitem{EN}
{\sc K.~Engel and R.~Nagel}, {\em One-Parameter Semigroups for Linear Evolution
  Equations}, Springer-Verlag, 1999.

\bibitem{Flan87}
{\sc F.~Flandoli}, {\em A new approach to the {L}-{Q}-{R} problem for
  hyperbolic dynamics with boundary control}, Lecture Notes in Control and
  Information Sciences, 102 (1987), pp.~89--111.

\bibitem{Kleinman1970}
{\sc D.~L. Kleinman}, {\em An easy way to stabilize a linear constant system},
  IEEE Transactions on Automatic Control,  (1970), p.~692.

\bibitem{K97}
{\sc V.~Komornik}, {\em Rapid boundary stabilization of linear distributed
  systems}, SIAM Journal on Control and Optimization, 35 (1997),
  pp.~1591--1613.

\bibitem{K98}
{\sc V.~Komornik}, {\em Rapid boundary stabilization of {M}axwell's equations},
  in \'{E}quations aux d{\'e}riv{\'e}es partielles et applications,
  Gauthier-Villars, {\'E}d. Sci. M{\'e}d. Elsevier, Paris, 1998, pp.~611--622.

\bibitem{KL05}
{\sc V.~Komornik and P.~Loreti}, {\em Fourier Series in Control Theory},
  Springer Monographs in Mathematics, Springer-Verlag, New York, 2005.

\bibitem{LT1983}
{\sc I.~Lasiecka and R.~Triggiani}, {\em Regularity of hyperbolic equations
  under {$L_{2}(0,\,T;L_{2}(\Gamma ))$}-{D}irichlet boundary terms}, Appl.
  Math. Optim., 10 (1983), pp.~275--286.

\bibitem{LT1}
{\sc I.~Lasiecka and R.~Triggiani}, {\em Control theory for partial
  differential equations: continuous and approximation theories. {I}}, vol.~74
  of Encyclopedia of Mathematics and its Applications, Cambridge University
  Press, Cambridge, 2000.
\newblock Abstract parabolic systems.

\bibitem{LT2}
\leavevmode\vrule height 2pt depth -1.6pt width 23pt, {\em Control theory for
  partial differential equations: continuous and approximation theories. {II}},
  vol.~75 of Encyclopedia of Mathematics and its Applications, Cambridge
  University Press, Cambridge, 2000.
\newblock Abstract hyperbolic-like systems over a finite time horizon.

\bibitem{Lions1988}
{\sc J.-L. Lions}, {\em Exact controllability, stabilizability and
  perturbations for distributed systems}, SIAM Rev.,  (1988), pp.~1--68.

\bibitem{Lukes1968}
{\sc D.~L. Lukes}, {\em Stabilizability and optimal control}, Funkcial. Ekvac.,
  11 (1968), pp.~39--50.

\bibitem{Pazy}
{\sc A.~Pazy}, {\em Semigroups of Linear Operators and Applications to Partial
  Differential Equations}, Springer-Verlag, 1992.

\bibitem{Russell1979}
{\sc D.~L. Russell}, {\em Mathematics of finite-dimensional control systems},
  vol.~43 of Lecture Notes in Pure and Applied Mathematics, Marcel Dekker Inc.,
  New York, 1979.
\newblock Theory and design.

\bibitem{Slemrod}
{\sc M.~Slemrod}, {\em A note on complete controllability and stabilizability
  for linear control systems in {H}ilbert space}, SIAM J. Control,  (1974),
  pp.~500--508.

\bibitem{Urquiza2000}
{\sc J.~M. Urquiza}, {\em Contr\^ole d'\'equations des ondes lin\'eaires et
  quasilin\'eaires}, PhD thesis, Universit\'e de Paris VI, 2000.

\end{thebibliography}
\end{document}